\renewcommand*\env@matrix[1][*\c@MaxMatrixCols c]{%
	\hskip -\arraycolsep
	\let\@ifnextchar\new@ifnextchar
	\array{#1}}
\newcommand{\R}{\mathbb{R}}
\newcommand{\F}{\mathbb{F}}
\newtheorem{thm}{Theorem}[section]
\newtheorem{definition}[thm]{Definition}
\newtheorem{rem}[thm]{Remark}
\newtheorem{prop}[thm]{Proposition}
\newtheorem{ex}[thm]{Example}
\def\blfootnote{\xdef\@thefnmark{}\@footnotetext}
\date{}
\begin{document}
	\sloppy
	
	\title{Derivations of two-step nilpotent algebras\blfootnote{Keywords: Leibniz algebra, Nilpotent Leibniz algebra, Lie algebra, Derivation, Almost inner derivation, Lie Theory.} \blfootnote{\textit{\textup{2020} Mathematics Subject Classification}: 15B30, 16W25, 17A32, 17A36, 17B40.}} \maketitle
	\noindent
	{{Gianmarco La Rosa}\footnote[2]{Supported by University of Palermo and by the “National Group for Algebraic and Geometric Structures, and their Applications” (GNSAGA – INdAM).}, {Manuel Mancini}\footnotemark[2]\\
		\footnotesize{Dipartimento di Matematica e Informatica}\\
		\footnotesize{Universit\`a degli Studi di Palermo, Via Archirafi 34, 90123 Palermo, Italy}\\
		\footnotesize{gianmarco.larosa@unipa.it}, ORCID: 0000-0003-1047-5993 \\
		\footnotesize{manuel.mancini@unipa.it}, ORCID: 0000-0003-2142-6193}

\begin{abstract}
In this paper we study the Lie algebras of derivations of two-step nilpotent algebras. We obtain a class of Lie algebras with trivial center and abelian ideal of inner derivations. Among these, the relations between the complex and the real case of the indecomposable \emph{Heisenberg Leibniz algebras} are thoroughly described. Finally we show that every \emph{almost inner derivation} of a complex nilpotent Leibniz algebra with one-dimensional commutator ideal, with three exceptions, is an inner derivation.
\end{abstract}

\section*{Introduction}

Leibniz algebras were first introduced by J.-L. Loday in \cite{loday1993version} as a non-antisymmetric version of Lie algebras, and many results of Lie algebras were also established in the frame of Leibniz algebras. Earlier, such algebraic structures were considered by A. Blokh, who called them D-algebras \cite{blokhLie}, for their strict connection with the derivations. Leibniz algebras play a significant role in different areas of mathematics and physics. \\
In \cite{BarDiFal18}  and \cite{falcone2016action} derivations of nilpotent Lie algebras with small dimensional commutator ideal were studied; in \cite{Katgorodov1} and \cite{LM2022} two-step nilpotent algebras were classified. After a short preliminary section, in this paper we aim to study the Lie algebras of derivations of two-step nilpotent algebras. Independently of its intrinsic interest, derivations find concrete applications in representation theory (cf.\ \cite{nonabext}), (sub-)Riemannian geometry and control theory (see \cite{BiggsNagy}, and the bibliography therein), just to give two instances. 

The first section is devoted to some background material on Leibniz algebras and derivations which will be useful for the rest of the paper. We address the reader to \cite{ayupov2019leibniz} and \cite{erdmann2006introduction} for more details. We recall that there are only three classes of indecomposable nilpotent Leibniz algebras with one-dimensional commutator ideal, namely the \emph{Heisenberg Leibniz algebras} $\mathfrak{l}_{2n+1}^A$, parametrized by their dimension $2n+1$ and a $n \times n$ matrix $A$ in canonical form, the \emph{Kronecker Leibniz algebras} $\mathfrak{k}_n$ and the \emph{Dieudonné Leibniz algebras} $\mathfrak{d}_n$, both parametrized by their dimension only. Recently noncommutative Heisenberg algebras were also studied in \cite{Kaygorodov2}.

In Sections 2, 3, 4 we find the derivation algebras of such nilpotent Leibniz algebras. The properties of these Lie algebras are studied and, for the Heisenberg algerbras, we analyze the relations between the complex and the real case. We always obtain Lie algebras of derivations with trivial center and abelian ideal of inner derivations. We use the \emph{LieAlgebras} package of the software \emph{Maple} for finding the radical, the Levi decomposition and the nilradical of such algebras.

In the last section we show that, for a complex nilpotent Leibniz algebra $L$ with one-dimensional commutator ideal, such that $L$ is not isomorphic to $\mathfrak{l}_{2n+1}^{J_{\pm 1}}$ or $\mathfrak{d}_n$, every \emph{almost inner derivation} is an inner derivation. This generalizes the result found by D. Burde, K. Dekimpe and V. Verbeke in \cite{Burde} for two-step nilpotent Lie algebras of \emph{genus} $1$. We also find the Lie subalgebras $\operatorname{AIDer}(\mathfrak{l}_{2n+1}^{J_{a}})$, when $a=\pm 1$, and $\operatorname{AIDer}(\mathfrak{d}_{n})$, showing that in this case the inclusions
$$
\operatorname{AIDer}(\mathfrak{l}_{2n+1}^{J_{\pm 1}}) \supsetneq \operatorname{Inn}(\mathfrak{l}_{2n+1}^{J_{\pm 1}})
\text{ and }
\operatorname{AIDer}(\mathfrak{d}_{n}) \supsetneq \operatorname{Inn}(\mathfrak{d}_{n})
$$
are strict.

\section{Preliminaries}

We assume that $\F$ is a field with $\operatorname{char}(\F)\neq2$. For the general theory we refer to \cite{ayupov2019leibniz}.

\begin{definition}
	A \emph{left Leibniz algebra} over $\mathbb{F}$ is a vector space $L$ over $\mathbb{F}$ endowed with a bilinear map (called $commutator$ or $bracket$) $\left[-,-\right]\colon L\times L \rightarrow L$ which satisfies the \emph{left Leibniz identity}
	$$
	\left[x,\left[y,z\right]\right]=\left[\left[x,y\right],z\right]+\left[y,\left[x,z\right]\right], \; \; \forall x,y,z\in L.
	$$
	
\end{definition}
	In the same way we can define a right Leibniz algebra, using the right Leibniz identity
	$$
	\left[\left[x,y\right],z\right]=\left[[x,z],y\right]+\left[x,\left[y,z\right]\right], \; \;\forall x,y,z\in L.
	$$
	A Leibniz algebra that is both left and right is called \emph{symmetric Leibniz algebra}. From now on we assume that $\dim_\F L<\infty$.\\
	
	 Clearly every Lie algebra is a Leibniz algebra and every Leibniz algebra with skew-symmetric commutator is a Lie algebra. Thus it is defined an adjunction (see \cite{mac2013categories}) between the category $\textbf{LieAlg}_{\F}$ of the Lie algebras over $\F$ and the category $\textbf{LeibAlg}_{\F}$ of the Leibniz algebras over $\F$. The left adjoint of the full inclusion $i \colon \textbf{LieAlg}_{\F}\rightarrow \textbf{LeibAlg}_{\F}$ is the functor $\pi \colon \textbf{LeibAlg}_{\F} \rightarrow \textbf{LieAlg}_{\F}$ that associates with every Leibniz algebra $L$ the quotient $L/\operatorname{Leib}(L)$, where $\operatorname{Leib}(L)=\langle \left[x,x\right] |\,x\in L \rangle$. We observe that $\operatorname{Leib}(L)$ is the smallest bilateral ideal of $L$ such that $L/\operatorname{Leib}(L)$ is a Lie algebra. Moreover $\operatorname{Leib}(L)$ is an abelian algebra. \\
	 
	  The definition of \emph{derivation} is the same as in the case of Lie algebras.
	  \begin{definition}
	  	Let $L$ be a Leibniz algebra over $\F$. A \emph{derivation} of $L$ is a linear map $d \colon L\rightarrow L$ such that 
	  	$$
	  	d(\left[x,y\right])=\left[d(x),y\right]+\left[x,d(y)\right], \;\; \forall x,y\in L.
	  	$$
	  \end{definition}
	  
	  The left multiplications are particular derivations called \emph{inner derivations} and an equivalent way to define a left Leibniz algebra is to say that the (left) adjoint map $\operatorname{ad}_x=\left[x,-\right]$ is a derivation, for every $x\in L$. Meanwhile, for a left Leibniz algebra, the right adjoint maps are not derivations in general.\\ With the usual bracket \hbox{$\left[d_1,d_2\right]=d_1\circ d_2 - d_2\circ d_1$}, the set $\operatorname{Der}(L)$ is a Lie algebra and the set  $\operatorname{Inn}(L)$ of all inner derivations of $L$ is an ideal of $\operatorname{Der}(L)$. Furthermore, $\operatorname{Aut}(L)$ is a Lie group and the associated Lie algebra is $\operatorname{Der}(L)$.
	  \begin{definition}
	  	A derivation $d$ of a Leibniz algebra $L$ is called \emph{almost inner derivation} if $d(x) \in [L,x]$, for every $x \in L$.
	  \end{definition}
  
   In \cite{AIDer} Z.\ K.\ Adashev and T.\ K.\ Kurbanbaev proved that the space of all almost inner derivations, denoted by $\operatorname{AIDer}(L)$, is a Lie subalgebra of $\operatorname{Der}(L)$ and 
   $$
   \operatorname{Inn}(L) \subseteq \operatorname{AIDer}(L).
   $$
	We define the left and the right center of a Leibniz algebra
	$$
	\operatorname{Z}_l(L)=\left\{x\in L \,|\,\left[x,L\right]=0\right\},\,\,\, \operatorname{Z}_r(L)=\left\{x\in L \,|\,\left[L,x\right]=0\right\},
	$$
	and we observe that they coincide when $L$ is a Lie algebra. The \emph{center} of $L$ is $\operatorname{Z}(L)=\operatorname{Z}_l(L)\cap \operatorname{Z}_r(L)$. In the case of symmetric Leibniz algebras, the left center and the right center are bilater ideals, but in general $\operatorname{Z}_l(L)$ is an ideal of the left Leibniz algebra $L$, meanwhile the right center is not even a subalgebra.
	
	\begin{definition}
		Let $L$ be a left Leibniz algebra over $\F$ and let
		$$
		L^{(0)}=L,\,\,L^{(k+1)}=[L,L^{(k)}],\; \; \forall k\geq0,
		$$ be the \emph{lower central series of $L$}. $L$ is \emph{$n-$step nilpotent} if $L^{(n-1)}\neq0$ and $L^{(n)}=0$.
	\end{definition}

We observe that two-step nilpotent Leibniz algebras lie in different varieties of non-associative algebras, such as associative, alternative and Zinbiel algebras. For this reason we will refer at them as \emph{two-step nilpotent algebras}.

One can directly prove the following.
	
	\begin{prop}
		If $L$ is a left two-step nilpotent algebra, then $L^{(1)}=[L,L]\subseteq \operatorname{Z}(L)$ and $L$ is symmetric.
	\end{prop}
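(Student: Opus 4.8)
The plan is to exploit the fact that two-step nilpotency forces the relevant triple brackets to vanish, and then read off both conclusions. First I would unpack the hypothesis: since $L^{(2)}=[L,L^{(1)}]=\bigl[L,[L,L]\bigr]=0$, we have $\bigl[x,[y,z]\bigr]=0$ for all $x,y,z\in L$. In particular, every element $w\in[L,L]$ satisfies $[L,w]=0$, so the inclusion $[L,L]\subseteq\operatorname{Z}_r(L)$ into the right center is immediate from the definition alone.

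The key step is to obtain the complementary vanishing $\bigl[[x,y],z\bigr]=0$, which does \emph{not} follow directly from the definition of nilpotency (that only controls right-nested brackets) but requires the left Leibniz identity. Rearranging
$$
\bigl[x,[y,z]\bigr]=\bigl[[x,y],z\bigr]+\bigl[y,[x,z]\bigr]
$$
gives $\bigl[[x,y],z\bigr]=\bigl[x,[y,z]\bigr]-\bigl[y,[x,z]\bigr]$, and both terms on the right-hand side lie in $\bigl[L,[L,L]\bigr]=0$. Hence $\bigl[[x,y],z\bigr]=0$ for all $x,y,z$, so every $w\in[L,L]$ also satisfies $[w,L]=0$, that is $[L,L]\subseteq\operatorname{Z}_l(L)$. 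Combining the two inclusions yields $[L,L]\subseteq\operatorname{Z}_l(L)\cap\operatorname{Z}_r(L)=\operatorname{Z}(L)$, which is the first assertion.

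For the second assertion I would simply observe that the two steps above together show that \emph{all} length-three products vanish: both $\bigl[x,[y,z]\bigr]=0$ and $\bigl[[x,y],z\bigr]=0$. Consequently the right Leibniz identity
$$
\bigl[[x,y],z\bigr]=\bigl[[x,z],y\bigr]+\bigl[x,[y,z]\bigr]
$$
reduces to $0=0+0$ and holds trivially, so $L$ is a right Leibniz algebra as well and is therefore symmetric. I do not expect a genuine obstacle here; the only point requiring care is recognizing that the definition of two-step nilpotency literally bounds only the right-nested triple brackets, and that it is precisely the left Leibniz identity that upgrades this to the vanishing of every triple product, from which both the centrality of $[L,L]$ and the symmetry of $L$ fall out at once.
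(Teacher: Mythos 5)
Your proof is correct. The paper gives no argument at all for this proposition (it only remarks that one can prove it directly), and your reasoning --- deducing $[x,[y,z]]=0$ from $L^{(2)}=0$, then using the left Leibniz identity to obtain $[[x,y],z]=0$ as well, so that $[L,L]$ lands in both the left and right centers and the right Leibniz identity holds vacuously --- is precisely the intended direct verification, with the one genuinely necessary observation (that nilpotency alone only kills the right-nested brackets) correctly identified and handled.
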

	
	\begin{prop}
		If $L$ is a left nilpotent Leibniz algebra with $\dim_\F L^{(1)}=1$, then  $L^{(1)}\subseteq \operatorname{Z}(L)$ and $L$ is symmetric.
	\end{prop}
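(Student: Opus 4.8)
The plan is to show that the nilpotency hypothesis, together with $\dim_\F L^{(1)}=1$, forces $L$ to be two-step nilpotent, after which the conclusion either follows from the preceding proposition or from a short direct computation. So first I would examine the lower central series. Since $L^{(1)}=[L,L]$ is one-dimensional, write $L^{(1)}=\langle z\rangle$ with $z\neq 0$. Because $L^{(2)}=[L,L^{(1)}]\subseteq[L,L]=L^{(1)}$ and $L^{(1)}$ is one-dimensional, the subspace $L^{(2)}$ is either $0$ or all of $L^{(1)}$. If $L^{(2)}=L^{(1)}$, then substituting back into $L^{(k+1)}=[L,L^{(k)}]$ gives $L^{(k)}=L^{(1)}\neq 0$ for every $k$ by induction, contradicting nilpotency. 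Hence $L^{(2)}=0$, so $L$ is two-step nilpotent.

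At this point one route is simply to invoke the previous proposition. For a self-contained argument I would continue as follows. The equality $L^{(2)}=[L,L^{(1)}]=0$ says precisely that $[L,z]=0$, i.e.\ $L^{(1)}\subseteq\operatorname{Z}_r(L)$. To obtain $L^{(1)}\subseteq\operatorname{Z}_l(L)$ I would apply the left Leibniz identity: for all $a,b,c\in L$,
$$[[a,b],c]=[a,[b,c]]-[b,[a,c]],$$
and since $[b,c],[a,c]\in L^{(1)}$ are annihilated on the left by the previous step, the right-hand side vanishes. As the elements $[a,b]$ span $L^{(1)}$, this yields $[L^{(1)},L]=0$, that is $L^{(1)}\subseteq\operatorname{Z}_l(L)$. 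Combining the two inclusions gives $L^{(1)}\subseteq\operatorname{Z}_l(L)\cap\operatorname{Z}_r(L)=\operatorname{Z}(L)$.

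For the symmetry I would verify the right Leibniz identity $[[x,y],z]=[[x,z],y]+[x,[y,z]]$ directly, observing that each of the three terms has an inner factor lying in $L^{(1)}\subseteq\operatorname{Z}(L)$. The terms $[[x,y],z]$ and $[[x,z],y]$ vanish because $[x,y],[x,z]\in\operatorname{Z}_l(L)$, while $[x,[y,z]]$ vanishes because $[y,z]\in\operatorname{Z}_r(L)$; thus the right identity reduces to $0=0$ and $L$ is symmetric.

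I do not expect a genuine obstacle here, since once $L^{(2)}=0$ is established everything reduces to the one-line Leibniz computations above. The only point requiring care is the dichotomy for $L^{(2)}$ and the verification that nilpotency excludes the case $L^{(2)}=L^{(1)}$; this is the crux that upgrades the hypothesis \emph{nilpotent} to \emph{two-step nilpotent} and makes the rest routine.
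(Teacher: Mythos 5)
Your argument is correct, and the paper itself offers no proof here (it merely remarks that these propositions can be proved directly), so your reduction to the two-step nilpotent case via the dichotomy $L^{(2)}\in\{0,L^{(1)}\}$, followed by the left Leibniz identity computations giving $L^{(1)}\subseteq \operatorname{Z}_l(L)\cap\operatorname{Z}_r(L)$ and the trivial verification of the right Leibniz identity, is exactly the intended direct verification. No gaps.
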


Nilpotent Leibniz algebras with one-dimensional commutator ideal were classified in \cite{LM2022}. It was proved that, up to isomorphism, there are only the following three classes of indecomposable nilpotent Leibniz algebras with one-dimensional commutator ideal. 
  
\begin{definition}\cite{LM2022}
Let $f(x)\in\F\left[x\right]$ be a monic irreducible polynomial. Let $k\in\mathbb{N}$ and let $A=(a_{ij})_{i,j}$ be the companion matrix of $f(x)^k$. The \emph{Heisenberg} Leibniz algebra $\mathfrak{l}_{2n+1}^A$ is the $(2n+1)$-dimensional Leibniz algebra with basis $\left\{e_1,\ldots,e_n,f_1,\ldots,f_n,z\right\}$ and the Leibniz brackets are given by
$$
[e_i,f_j]=(\delta_{ij}+a_{ij})z, \; [f_j,e_i]=(-\delta_{ij}+a_{ij})z, \; \; \forall i,j=1,\ldots,n.
$$
\end{definition}

Notice that, if $A$ is the zero matrix, then we obtain the $(2n+1)-$dimensional Heisenberg Lie algebra $\mathfrak{h}_{2n+1}$.

\begin{definition}\cite{LM2022}
	Let $n \in \mathbb{N}$. The \emph{Kronecker} Leibniz algebra $\mathfrak{k}_{n}$ is the $(2n+1)$-dimensional Leibniz algebra with basis $\left\{e_1,\ldots,e_n,f_1,\ldots,f_n,z\right\}$ and the Leibniz brackets are given by
	\begin{align*}
		&[e_i,f_i] = [f_i,e_i] = z, \; \; \forall i=1,\ldots,n \\
		& [e_i,f_{i-1}] = z, [f_{i-1},e_i] = −z, \; \; \forall i= 2,\ldots,n.
	\end{align*}
\end{definition}

\begin{definition}\cite{LM2022}
	Let $n \in \mathbb{N}$. The \emph{Dieudonné} Leibniz algebra $\mathfrak{d}_n$ is the $(2n+2)$-dimensional Leibniz algebra with basis $\left\{e_1,\ldots,e_{2n+1},z\right\}$ and and the Leibniz brackets are given by
	\begin{align*}
		&\left[e_1,e_{n+2}\right]=z,\\
		&\left[e_i,e_{n+i}\right]=\left[e_i,e_{n+i+1}\right]=z, \; \; \forall i=2,\ldots,n,\\
		&\left[e_{n+1},e_{2n+1}\right]=z,\\
		&\left[e_i,e_{i-n}\right]=z, \; \; \left[e_i,e_{i-n-1}\right]=-z, \; \; \forall i=n+2,\ldots,2n+1.
	\end{align*}
	
\end{definition}

In the next sections we study the Lie algebra of derivations of the three classes of indecomposable nilpotent Leibniz algebras with one-dimensional commutator ideal. We observe that, given a derivation $d$ of a Leibniz algebra $L$, we have
$$
d([L,L])\subseteq [L,L],
$$
thus, if $[L,L]=\F z$, it follows that $d(z)=\gamma z$, for some $\gamma \in \F$.
	
	\section{Derivations of the Heisenberg Leibniz algebras $\mathfrak{l}_{2n+1}^A$}
	
	Now we want to study in details the Lie algebras of derivations of the Heisenberg Leibniz algebras in the case the field $\F$ is $\mathbb{C}$ or $\R$.
	
	\subsection{The complex case}
	
	Let $n\in\mathbb{N}$ and let $f(x)=x-a\in\mathbb{C}[x]$. Then the companion matrix $A$ of $f(x)^n$ is similar to the $n\times n$ Jordan block $J_a$ of eigenvalue $a$. Thus $\mathfrak{l}_{2n+1}^A\cong\mathfrak{l}_{2n+1}^{J_a}$ and the Leibniz brackets are given by
	\begin{align*}
		&\left[e_i,f_i\right]=(1+a)z, \; \; \left[f_i,e_i\right]=(-1+a)z, \; \; \forall i=1,\ldots,n,\\
		&\left[e_i,f_{i-1}\right]=\left[f_{i-1},e_i\right]=z, \; \; \forall i=2,\ldots,n,
	\end{align*}
	where $\left\{e_1,\ldots,e_n,f_1, \ldots,f_n,z\right\}$ is a basis of $\mathfrak{l}_{2n+1}^{J_a}$. Moreover $\mathfrak{l}_{2n+1}^{J_a} \cong \mathfrak{l}_{2n+1}^{J_{-a}}$.
	
	Now let $d\colon \mathfrak{l}_{2n+1}^{J_a} \rightarrow \mathfrak{l}_{2n+1}^{J_a}$ be a linear endomorphism such that
	$$
	d(e_i)=\sum_{j=1}^{n}a_{ji}e_j + \sum_{k=1}^{n}b_{ki}f_k + a_i z, \; \; \forall i=1,\ldots,n,
	$$
	$$
	d(f_i)=\sum_{j=1}^{n}c_{ji}e_j + \sum_{k=1}^{n}d_{ki}f_k + b_i z,  \; \; \forall i=1,\ldots,n
	$$
	and
	$$
	d(z)=\gamma z.
	$$
	Then for $a \neq 0$ the linear endomorphism $d$ is a derivation of the Heisenberg algebra $\mathfrak{l}_{2n+1}^{J_a}$ if and only if it has the following form
	$$
	\begin{pmatrix}
		A & 0 & 0 \\
		0 & D & 0 \\
		\mu & \nu & \gamma \\
	\end{pmatrix}
    $$
    where
    $$
    A=\left(
    \begin{array}{*5{c}}
    	\alpha_1 & \alpha_2 & \alpha_3& \ldots & \alpha_n \\
    	0 & \alpha_1 & \alpha_2 & \ldots & \alpha_{n-1}  \\
    	0 &  0 &\alpha_1 & \ldots & \alpha_{n-2}  \\
    	\vdots &  \vdots &  \vdots & \ddots & \vdots \\
    	0 &  0 &  0 &  \ldots & \alpha_1 \\
    \end{array}\right), \; \;
D= \left(
\begin{array}{*5{c}}
	\beta_1 & 0 & 0 & \ldots & 0 \\
	-\alpha_2 & \beta_1 & 0 & \ldots & 0  \\
	-\alpha_3 &  -\alpha_2 &\beta_1 & \ldots & 0  \\
	\vdots &  \vdots &  \vdots & \ddots & \vdots \\
	-\alpha_n &  -\alpha_{n-1} &  -\alpha_{n-2} &  \ldots & \beta_1 \\
\end{array}\right),
    $$
    $$
    \mu=(\mu_1,\mu_2,\mu_3,\ldots,\mu_n), \; \; \nu=(\nu_1,\nu_2,\nu_3,\ldots,\nu_n), \; \; \gamma=\alpha_1+\beta_1.
    $$ 
    Thus, $\operatorname{Der}(\mathfrak{l}_{2n+1}^{J_a})$ is the Lie subalgebra of $\operatorname{gl}(2n+1,\mathbb{C})$ of dimension $3n+1$ with basis
    $$
    \lbrace x,y,E_1,\ldots,E_{n-1},A_1,\ldots,A_n,B_1,\ldots,B_n \rbrace,
    $$
    where
    \begin{align*}
    	&x=\sum_{k=1}^{n} e_{k,k}+e_{2n+1,2n+1}, \; \; y=\sum_{k=1}^{n} e_{n+k,n+k}+e_{2n+1,2n+1},\\
    	&E_i=\sum_{k=1}^{n-i}(e_{k,k+i}-e_{n+i+k,n+k}), \; \; \forall i=1,\ldots,n-1,\\
    	&A_i=e_{2n+1,i}, \; \; B_i=e_{2n+1,n+i}, \; \; \forall i=1,\ldots,n,
    \end{align*}
    and $e_{ij}$ are matrix units and non-trivial commutators given by
    \begin{align*}
    	&[x,B_i]=B_i, \; [y,A_i]=A_i, \; \; \forall i=1,\ldots,n,\\
    	&[E_i,B_k]=B_{k-i}, \; \; 1 \leq i < k \leq n,\\
    	&[E_i,A_k]=-A_{i+k}, \; \; 1 \leq i \leq n-1,\; 1 \leq k \leq n-i.
    \end{align*}

\begin{rem}\label{changebasis}
	With the change of basis 
	$$
	\left\{e_1,\ldots,e_n,f_1, \ldots,f_n,z\right\} \mapsto \left\{e_1,f_1,\ldots,e_n,f_n,z\right\},
	$$
	a derivation of $\mathfrak{l}_{2n+1}^{J_a}$ is represented by the $(2n+1) \times (2n+1)$ matrix
    \begin{equation*}\label{matrix1}
    	\begin{pmatrix}[cccccc|c]
    		M_1 & -\widetilde{M_2} & -\widetilde{M_3} & -\widetilde{M_4} & \cdots & -\widetilde{M}_{n} & 0 \\
    		M_2 & M_1 & -\widetilde{M_2} & -\widetilde{M_3} & \cdots & -\widetilde{M}_{n-1} & 0\\
    		M_3 & M_2 & M_1 & -\widetilde{M_2} & \cdots & -\widetilde{M}_{n-2} & 0\\
    		M_4 & M_3 & M_2 & M_1 & \cdots & -\widetilde{M}_{n-3} & 0\\
    		\vdots & \vdots & \vdots & \vdots &\ddots&\vdots & \vdots \\
    		M_{n} & M_{n-1} & M_{n-2} & M_{n-3} & \cdots & M_1 & 0 \\
    		\hline
    		v_1 & v_2 & v_3 & v_4 & \cdots & v_n & \operatorname{tr}(M_1)
    	\end{pmatrix}
    \end{equation*}
    where
    $$
    M_1=\begin{pmatrix}
    	\alpha_1 & 0 \\
    	0 & \beta_1 \\
    \end{pmatrix}, \; \;
    M_i=\begin{pmatrix}
    	0 & 0 \\0 & \alpha_i \\
    \end{pmatrix}, \; \;
    \widetilde{M}_i=\begin{pmatrix}
	\alpha_i & 0 \\0 & 0 \\
\end{pmatrix},
 \; \; \forall i=2,\ldots,n,
    $$
	$v_k=(\mu_k,\nu_k)$, for any $k=1,\ldots,n$ and $\operatorname{tr}(M_1)=\alpha_1+\beta_1$.\\
	
	In this case $\operatorname{Der}(\mathfrak{l}_{2n+1}^{J_a})$ has basis
	$$
	\lbrace x,y,E_1,\ldots,E_{n-1},A_1,\ldots,A_n,B_1,\ldots,B_n \rbrace,
	$$
	where
	\begin{align*}
	&x=\sum_{k=1}^{n} e_{2k-1,2k-1}+e_{2n+1,2n+1}, \; \; y=\sum_{k=1}^{n} e_{2k,2k}+e_{2n+1,2n+1},\\
	&E_i=\sum_{k=0}^{n-i+1}(e_{2(k+i+1),2(k+1)}-e_{2k+1,2(k+i)+1}), \; \; \forall i=1,\ldots,n-1,\\
	&A_i=e_{2n+1,2i-1}, \; \; B_i=e_{2n+1,2i}, \; \; \forall i=1,\ldots,n
	\end{align*}
	and the Lie brackets are given by
	\begin{align*}
	&[x,B_i]=B_i, \; [y,A_i]=A_i, \; \; \forall i=1,\ldots,n,\\
	&[E_i,B_k]=-B_{k-2i}, \; \; k > 2i,\\
	&[E_i,A_k]=A_{k+2i}, \; \; k+2i \leq 2n.
	\end{align*}
With this representation, one can check that
$$
\operatorname{Der}(\mathfrak{l}_{2n+1}^{J_a}) \subseteq \operatorname{Der}(\mathfrak{h}_{2n+1}),
$$
where the derivations of the $(2n+1)-$dimensional Heisenberg Lie algebra $\mathfrak{h}_{2n+1}$ were determined in \cite{dath:hal-01215682}, with respect to the symplectic basis $\lbrace e_1,f_1,\ldots,e_n,f_n,z\rbrace $ of $\mathfrak{h}_{2n+1}$. Later we present the derivations of the Heisenberg Leibniz algebra $\mathfrak{l}_{2n+1}^{J_0}$ and the ones of the Kronecker Leibniz algebra $\mathfrak{k}_n$ with respect to this basis, in order to compare them with the corresponding ones of the Heisenberg Lie algebra $\mathfrak{h}_{2n+1}$.
\end{rem}

	The commutator ideal of $\operatorname{Der}(\mathfrak{l}_{2n+1}^{J_a})$ is the abelian algebra of dimension $2n$ with basis
	$$
	\lbrace A_1,\ldots A_n, B_1,\ldots, B_n \rbrace,
	$$
	thus $\operatorname{Der}(\mathfrak{l}_{2n+1}^{J_a})$ is a two-step solvable Lie algebra. Moreover the lower central series is 
	$$
	\operatorname{Der}(\mathfrak{l}_{2n+1}^{J_a}) \supseteq \langle A_1,\ldots A_n, B_1,\ldots, B_n \rangle \supseteq \langle A_1,\ldots A_n, B_1,\ldots, B_n \rangle \supseteq ....
	$$
	so $\operatorname{Der}(\mathfrak{l}_{2n+1}^{J_a})$ is not nilpotent and its nilradical is the ideal
	$$
	N=\langle E_1,\ldots,E_{n-1},A_1,\ldots,A_n,B_1,\ldots,B_n \rangle.
	$$
	Finally the center $\operatorname{Z}(\operatorname{Der}(\mathfrak{l}_{2n+1}^{J_a}))$ is trivial and the algebra of inner derivations of $\mathfrak{l}_{2n+1}^{J_a}$ is
	$$
	\operatorname{Inn}(\mathfrak{l}_{2n+1}^{J_a})=\langle A_h,\ldots,A_n,B_1,\ldots,B_k \rangle,
	$$
	with $h=1$ and $k=n$ if $a \neq \pm 1$; $h=2$ and $k=n$ if $a=1$; and $h=1$ and $k=n-1$ if $a=-1$. Indeed
	$$
	\operatorname{ad}_{e_i}=B_{i-1}+(1+a)B_i, \; \; \forall i=2,\ldots,n,
	$$
	$$
	\operatorname{ad}_{f_j}=A_{j+1}+(-1+a)A_j, \; \; \forall j=1,\ldots,n-1
	$$
	and
	$$
	\operatorname{ad}_{e_1}=(1+a)B_1, \; \operatorname{ad}_{f_n}=(-1+a)A_n,
	$$
	thus for $a=1$ we have
	$$
	\operatorname{ad}_{f_n}=0, \; \operatorname{ad}_{f_j}=A_{j+1}, \; \; \forall j=1,\ldots,n-1
	$$
	and the matrix $A_1$ does not represent an inner derivations. In the same way, if $a=-1$, then $B_{n} \not\in \operatorname{Inn}(\mathfrak{l}_{2n+1}^{J_1})$. We will show later that
	$$
	\operatorname{AIDer}(\mathfrak{l}_{2n+1}^{J_a})=\langle A_1,\ldots,A_n,B_1,\ldots,B_n \rangle
	$$
	for every $a \in \mathbb{C}$. \\
	
When $a = 0$, a derivation of the Heisenberg Leibniz algebra $\mathfrak{l}_{2n+1}^{J_0}$ has the form
$$
\begin{pmatrix}
	A & C & 0 \\
	B & D & 0 \\
	\mu & \nu & \gamma \\
\end{pmatrix}
$$
where $A$, $D$, $\mu$, $\nu$ and $\gamma$ are as above,
$$
B=\left(
\begin{array}{*9{c}}
	0 & 0 & 0 & 0 & \ldots & 0 & 0 & 0 & 0 \\
	0 & 0 & 0 & 0 & \ldots & 0 & 0 & 0 & -b_{n+2}  \\
	0 &  0 & 0 & 0 & \ldots & 0 & 0 & b_{n+2} & 0 \\
	0 & 0 & 0 & 0 & \ldots & 0 & -b_{n+2} & 0 & -b_{n+4}\\
	\vdots &  \vdots &  \vdots & \vdots &\iddots & \vdots & \vdots & \vdots & \vdots \\
	0 & 0 & 0 &0 & \ldots & b_{2n-6} & 0 & b_{2n-4} & 0\\
	0 & 0 & 0 & -b_{n+2} & \ldots & 0 & -b_{2n-4} & 0 & -b_{2n-2}\\
	0 &  0 & b_{n+2} & 0 &  \ldots & b_{2n-4} & 0 & b_{2n-2} & 0 \\
	0 & -b_{n+2} & 0 & -b_{n+4} &  \ldots & 0 & -b_{2n-2} & 0 & -b_{2n} \\
\end{array}\right), \; \;
$$
$$
C=\left(
\begin{array}{*9{c}}
     c_2 & 0 & c_4 & 0 & \ldots & c_{n-2} & 0 & c_{n} & 0 \\
	0 & -c_4 & 0 & -c_6 & \ldots & 0 & -c_n & 0 &0  \\
	 c_4 & 0 & c_6 & 0 & \ldots & c_n & 0 & 0 & 0\\
	0 & -c_6 & 0 & -c_8  & \ldots & 0 & 0 & 0 & 0\\
	\vdots &  \vdots &  \vdots & \vdots &\iddots & \vdots & \vdots & \vdots & \vdots \\
	c_{n-1} & 0 & c_n & 0 & \ldots & 0 & 0 & 0 & 0\\
	0 & -c_n & 0 & 0 & \ldots & 0 & 0 & 0 & 0\\
	c_n &  0 &  0 & 0 &  \ldots & 0 & 0 & 0 & 0 \\
	0 &  0 & 0 & 0 &  \ldots & 0 & 0 & 0 & 0 \\
\end{array}\right), \; \;
$$
if $n$ is even,
$$
B=\left(
\begin{array}{*9{c}}
	0 & 0 & 0 & 0 & \ldots & 0 & 0 & 0 & b_{n+1} \\
	0 & 0 & 0 & 0 & \ldots & 0 & 0 & -b_{n+1} & 0  \\
	0 &  0 & 0 & 0 & \ldots & 0 & b_{n+1} & 0 & b_{n+3}\\
	0 & 0 & 0 & 0 & \ldots & -b_{n+1} & 0 & -b_{n+3} & 0\\
	\vdots &  \vdots &  \vdots & \vdots &\iddots & \vdots & \vdots & \vdots & \vdots \\
	0 & 0 & 0 & -b_{n+1} & \ldots & -b_{2n-3} & 0 & -b_{2n-2} & 0\\
	0 & 0 & b_{n+1} & 0 & \ldots & 0 & b_{2n-3} & 0 & b_{2n-2}\\
	0 &  -b_{n+1} & 0 & -b_{n+3}&  \ldots & -b_{2n-2} & 0 & -b_{2n} & 0 \\
	b_{n+1} & 0 & b_{n+3} & 0 &  \ldots & 0 & b_{2n-2} & 0 & b_{2n} \\
\end{array}\right), \; \;
$$
$$
C=\left(
\begin{array}{*9{c}}
	c_2 & 0 & c_4 & 0 & \ldots & 0 & c_{n-1} & 0 & c_{n+1} \\
	0 & -c_4 & 0 & -c_6 & \ldots & -c_{n-1} & 0 & -c_{n+1} &0  \\
	c_4 & 0 & c_6 & 0 & \ldots & 0 & c_{n+1} & 0 & 0\\
	0 & -c_6 & 0 & -c_8 & \ldots & -c_{n+1} & 0 & 0 & 0\\
	\vdots &  \vdots &  \vdots & \vdots &\iddots & \vdots & \vdots & \vdots & \vdots \\
	0 & -c_{n-1} & 0 & -c_{n+1} & \ldots & 0 & 0 & 0 & 0\\
	c_{n-1} & 0 & c_{n+1} & 0 & \ldots & 0 & 0 & 0 & 0\\
	0 &  -c_{n+1} &  0 & 0 &  \ldots & 0 & 0 & 0 & 0 \\
	c_{n+1} &  0 & 0 & 0 &  \ldots & 0 & 0 & 0 & 0 \\
\end{array}\right), \; \;
$$
if $n$ is odd.\\

If we reorder the basis basis as in Remark \ref{changebasis}, then a derivation of $\mathfrak{l}_{2n+1}^{J_0}$ is represented by
	\begin{equation*}\label{matrix2}
		\setcounter{MaxMatrixCols}{12}
		\begin{pmatrix}[ccccccccccc|c]
			\alpha_1 & c_2 & -\alpha_2 & 0 & -\alpha_3 & c_4 &  \cdots & -\alpha_{n-1} & c_n & -\alpha_{n} & 0 & 0 \\
			0 & \beta_1 & 0 & 0 & 0 & 0 & \cdots & 0 & 0 & 0 & 0 & 0\\
			0 & 0 & \alpha_1 & -c_4 & -\alpha_2 & 0 & \cdots & -\alpha_{n-2} & 0 & -\alpha_{n-1} & 0 & 0 \\
			0 & \alpha_2 & 0 & \beta_1 & 0 & 0 & \cdots & 0 & 0 & -b_{n+2} & 0 & 0 \\
			0 & c_4 & 0 & 0 & \alpha_1 & c_6 & \cdots & -\alpha_{n-3} & 0 & -\alpha_{n-2} & 0 & 0 \\
			0 & \alpha_3 & 0 & \alpha_2  & 0 & \beta_1 & \cdots & b_{n+2} & 0 & 0 & 0 & 0\\
			\vdots & \vdots & \vdots & \vdots & \vdots & \vdots & \ddots & \vdots & \vdots & \vdots & \vdots & \vdots \\
			0 & c_n & 0 & 0 & 0 & 0 & \cdots & \alpha_1 & 0 & -\alpha_2 & 0 & 0 \\
			0 & \alpha_{n-1} & 0 & \alpha_{n-2} & b_{n+2} & \alpha_{n-3} & \cdots & b_{2n-2} & \beta_1 & 0 & 0 & 0 \\
			0 & 0 & 0 & 0 & 0 & 0 & \cdots & 0 & 0 & \alpha_1 & 0 & 0 \\
			0 & \alpha_{n} & -b_{n+2} & \alpha_{n-1} & 0 & \alpha_{n-2}  & \cdots & 0 & \alpha_2 & b_{2n} & \beta_1 & 0 \\
			\hline
			\mu_1 & \nu_1 & \mu_2 & \nu_2 & \mu_3 & \nu_3 &\cdots & \mu_{n-1} & \nu_{n-1} & \mu_n & \nu_n & \alpha_1+\beta_1
		\end{pmatrix}
	\end{equation*}
	if $n$ is even, and 
	\begin{equation*}\label{matrix3}
		\setcounter{MaxMatrixCols}{12}
		\begin{pmatrix}[ccccccccccc|c]
			 \alpha_1 & c_2 & -\alpha_2 & 0 & -\alpha_3 & c_4 &  \cdots & -\alpha_{n-1} & 0 & -\alpha_{n} & c_{n+1} & 0 \\
			0 & \beta_1 & 0 & 0 & 0 & 0 & \cdots & 0 & 0 & b_{n+1} & 0 & 0 \\
			0 & 0 &  \alpha_1 & -c_4 & -\alpha_2 & 0 & \cdots & -\alpha_{n-2} & -c_{n+1} &  -\alpha_{n-1} &  0 & 0 \\
			0 & \alpha_2 & 0 & \beta_1 & 0 & 0 & \cdots & -b_{n+1} & 0 & 0  & 0 & 0 \\
			0 & c_4 & 0 & 0 &  \alpha_1 & c_6 & \cdots & -\alpha_{n-3} & 0 &  -\alpha_{n-2}  & 0 & 0 \\
			0 & \alpha_3 & 0 & \alpha_2  & 0 & \beta_1 & \cdots & 0 & 0& b_{n+3}  & 0 & 0 \\
			\vdots & \vdots & \vdots & \vdots & \vdots & \vdots & \ddots & \vdots & \vdots & \vdots & \vdots \\
			0 & 0 & 0 & -c_{n+1} & 0 & 0 & \cdots &  \alpha_1 & 0 & -\alpha_2 & 0 & 0 \\
			0 & \alpha_{n-1} & -b_{n+1} & \alpha_{n-2} & 0 & \alpha_{n-3} & \cdots & -b_{2n-2}  & \beta_1 & 0 & 0 & 0 \\
			0 & c_{n+1} & 0 & 0 & 0 & 0 & \cdots & 0  & 0 &  \alpha_1 & 0 & 0 \\
			b_{n+1} & \alpha_{n} & 0 & \alpha_{n-1}  & b_{n+3} & \alpha_{n-2} & \cdots & 0 & \alpha_1 & b_{2n} & \beta_1 & 0 \\
			\hline
			\mu_1 & \nu_1 & \mu_2 & \nu_2 & \mu_3 & \nu_3 &\cdots & \mu_{n-1}  & \nu_{n-1} & \mu_n & \nu_n &  \alpha_1+\beta_1
		\end{pmatrix}
	\end{equation*}
if $n$ is odd. We can now study in details these two cases.\\
  
  If $n$ is even, then $\operatorname{Der}(\mathfrak{l}_{2n+1}^{J_0})$ is a Lie algebra of dimension $4n+1$ with basis
		$$
		\lbrace x,y,E_1,\ldots,E_{n-1},c_2,c_4,\ldots,c_n,b_{n+2},b_{n+4}\ldots,b_{2n},A_1,\ldots,A_n,B_1,\ldots,B_n \rbrace,
		$$
		where
		\begin{align*}
		&x=\sum_{k=1}^{n} e_{2k-1,2k-1}+e_{2k+1,2k+1}, \; \; \\
		&y=\sum_{k=1}^{n} e_{2k,2k}+e_{2k+1,2k+1},\\
		&E_i=\sum_{k=0}^{n-i+1}(e_{2(k+i+1),2(k+1)}-e_{2k+1,2(k+i)+1}), \; \; \forall i=1,\ldots,n-1,\\
		&A_i=e_{2n+1,2i-1}, \; \; B_i=e_{2n+1,2i}, \; \; \forall i=1,\ldots,n,\\
		&c_h=\sum_{i=0}^{h-2}(-1)^i e_{2(h-i-1)-1,2(1+i)}, \; \; \forall h=2,4,\ldots,n,\\
		&b_h=\sum_{i=0}^{2n-h}(-1)^i e_{2(n-i),2(h-n+i)-1}, \; \; \forall h=n+2,n+4,\ldots,2n
	\end{align*}
		and commutators
		\begin{align*}
		&[x,B_i]=B_i, \; [y,A_i]=A_i, \; \; \forall i=1,\ldots,n,\\
		&[E_i,B_k]=-B_{k-2i}, \; \; k > 2i,\\
		&[E_i,A_k]=A_{k+2i}, \; \; k+2i \leq 2n,\\
		&[x,c_{h}]=c_{h}, \; [y,c_{h}]=-c_h, \; \; \forall h=2,4,\ldots,n,\\
		&[x,b_{h}]=-b_{h}, \; [y,b_{h}]=b_h, \; \; \forall h=n+2,n+4,\ldots,2n,\\
		&[A_i,c_k]=(-1)^{i+1}B_{k-i}, \; [B_i,b_k]=(-1)^iA_{k-i}, \; \; 1 \leq k-i \leq n,\\
		&[c_k,b_h]=E_{h-k}, \; \; h-k \geq 1,\\
		&[\alpha_{2i},c_h]=-2c_{h-2i}, \; \; h-2i >0,\\
		&[\alpha_{2i},b_h]=2b_{h+2i}, \; \; h+2i \leq 2n.
		\end{align*}
		
		Then the commutator ideal of $\operatorname{Der}(\mathfrak{l}_{2n+1}^{J_0})$ has basis
		$$
		\lbrace E_2,E_4,\ldots,E_{n-2},c_2,c_4,\ldots,c_n,b_{n+2},b_{n+4}\ldots,b_{2n},A_1,\ldots,A_n,B_1,\ldots,B_n \rbrace 
		$$
		and we have a $(\frac{n}{2}+1)-$step solvable Lie algebra with derived series
		$$
		\operatorname{Der}(\mathfrak{l}_{2n+1}^{J_0}) \supseteq [\operatorname{Der}(\mathfrak{l}_{2n+1}^{J_0}),\operatorname{Der}(\mathfrak{l}_{2n+1}^{J_0})] \supseteq
		$$
		$$
		\supseteq \langle E_2,E_4,\ldots,E_{n-2},c_2,c_4,\ldots,c_{n-2},b_{n+4},\ldots,b_{2n},A_2,\ldots,A_n,B_1,\ldots,B_{n-1} \rangle \supseteq \ldots
		$$
		$$
		\ldots \supseteq \langle c_2,b_{2n},A_{\frac{n}{2}+1},\ldots,A_{n},B_1,\ldots,B_{\frac{n}{2}} \rangle \supseteq 0
		$$
		Moreover, $\operatorname{Der}(\mathfrak{l}_{2n+1}^{J_0})$ is not nilpotent and its nilradical is the ideal
		$$
		N=\langle E_1,\ldots,E_{n-1},c_2,c_4,\ldots,c_n,b_{n+2},b_{n+4}\ldots,b_{2n},A_1,\ldots,A_n,B_1,\ldots,B_n \rangle.
		$$

	If $n$ is odd, then the algebra of derivations of $\mathfrak{l}_{2n+1}^{J_0}$ has dimension $4n+2$ and it is generated by
		$$
		\lbrace x,y,E_1,\ldots,E_{n-1},c_2,c_4,\ldots,c_{n+1},b_{n+1},b_{n+3}\ldots,b_{2n},A_1,\ldots,A_n,B_1,\ldots,B_n \rbrace.
		$$
		The Lie brackets are the same of the ones listed for the case $n$ even, except for the facts that $[B_i,b_k]=(-1)^{i+1}A_{k-i}$, for any $1 \leq k-i \leq n$, and $[c_{n+1},b_{n+1}]=x-y$. Then the commutator ideal is the subspace generated by
		$$
		\lbrace x-y,E_2,E_4,\ldots,E_{n-1},c_2,c_4,\ldots,c_{n+1},b_{n+1},b_{n+3}\ldots,b_{2n},A_1,\ldots,A_n,B_1,\ldots,B_n \rbrace. 
		$$
		In this case the Lie algebra of derivations is not solvable since
		$$
		[[\operatorname{Der}(\mathfrak{l}_{2n+1}^{J_0}),\operatorname{Der}(\mathfrak{l}_{2n+1}^{J_0})],[\operatorname{Der}(\mathfrak{l}_{2n+1}^{J_0}),\operatorname{Der}(\mathfrak{l}_{2n+1}^{J_0})]]=[\operatorname{Der}(\mathfrak{l}_{2n+1}^{J_0}),\operatorname{Der}(\mathfrak{l}_{2n+1}^{J_0})]
		$$
		and the Levi decomposition is given by
		$$
		\operatorname{Der}(\mathfrak{l}_{2n+1}^{J_0})=R \rtimes S,
		$$
		where the radical of the Lie algebra is
		$$
		R=\langle x+y,E_1,\ldots,E_{n-1},c_2,c_4,\ldots,c_{n-1},b_{n+3},b_{n+5}\ldots,b_{2n},A_1,\ldots,A_n,B_1,\ldots,B_n \rangle
		$$
		and the Levi complement is
		$$
		S=\langle x-y,c_{n+1},b_{n+1} \rangle.
		$$
		Finally the nilradical is the ideal
		$$
		N=\langle E_1,\ldots,E_{n-1},c_2,c_4,\ldots,c_{n-1},b_{n+3},b_{n+5}\ldots,b_{2n},A_1,\ldots,A_n,B_1,\ldots,B_n \rangle.
		$$

     In both cases $n$ is even or odd, we have that $\operatorname{Z}(\operatorname{Der}(\mathfrak{l}_{2n+1}^{J_0}))=0$ and the Lie algebra of inner derivations is represented by the matrices of the type
     \begin{equation*}
     	$$\[
     	\left(
     	\begin{array}{c|c}
     		& 0 \\
     		\textbf{\Large0}   & \vdots \\
     		& 0 \\
     		\hline
     		\mu_1 \; \nu_1 \ldots \mu_{n} \; \nu_n & 0 \\
     	\end{array}
     	\right),
     	\]$$
     \end{equation*}
     thus $\operatorname{Inn}(\mathfrak{l}_{2n+1}^{J_0})$ is an abelian algebra of dimension $2n$. Moreover, for every $n \in \mathbb{N}$ and for every $a \in \mathbb{C}^{\ast}$, we observe that
     $$
     \operatorname{Der}(\mathfrak{h}_{2n+1}) \supseteq 
     \operatorname{Der}(\mathfrak{l}_{2n+1}^{J_0}) \supseteq \operatorname{Der}(\mathfrak{l}_{2n+1}^{J_a}).
     $$

	\subsection{The real case}
	
	Irreducible polynomials in $\R[x]$ have degree one or two. Let $f(x)\in\R[x]$ be an irreducible monic polynomial. If $f(x)=x-a$, then we obtain the same results of the complex case. So we suppose that $f(x)=x^2+C x+D$, with $C^2-4D<0$.\\
	
	Let $z=a + i b \in \mathbb{C} \setminus \mathbb{R}$ be a root of $f(x)$. Then $f(x)=(x-z)(x-\bar{z})$ and the companion matrix $A$ of $f(x)^n$ in similar to the $2n \times 2n$ real block matrix
	$$
	J_R=\begin{pmatrix}
		R&0&\cdots&0\\I_2&R&\cdots&0\\\vdots&\ddots&\ddots&\vdots\\0&\cdots&I_2&R
	\end{pmatrix},
	$$
	where
	$$
	R=\begin{pmatrix}
		a & b \\
		-b & a
	\end{pmatrix}
	$$
	is the realification of the complex number $z$. Thus $\mathfrak{l}_{4n+1}^{A} \cong \mathfrak{l}_{4n+1}^{J_R}$ 
	and $\mathfrak{l}_{4n+1}^{J_R}$ is the realification of the complex algebra $\mathfrak{l}_{2n+1}^{J_z}$. In \cite{Falcone2017class} the derivations of the realification of the $(2n+1)-$dimensional Heisenberg Lie algebra $\mathfrak{h}_{2n+1}$ were studied. We want to find the conditions such that the realification of a derivation of the complex algebra $\mathfrak{l}_{2n+1}^{J_z}$, with $z=a+ib \in \mathbb{C} \setminus \mathbb{R}$, is a derivation of the real algebra $\mathfrak{l}_{4n+1}^{J_R}$. We will investigate the case $n=1$.\\
	
	Let $\lbrace e_1,f_1,e_2,f_2,z \rbrace$ be a basis of the real algebra $\mathfrak{l}_5^R$. The non-trivial commutators are
	\begin{align*}
		&[e_i,f_i]=(1+a)z,\; [f_i,e_i]=(-1+a)z, \; \; \forall i=1,2,\\
		&[e_1,f_2]=[f_2,e_1]=bz,\; [e_2,f_1]=[f_1,e_2]=-bz
	\end{align*}
	and it comes out that a general derivation of $\mathfrak{l}_{5}^{R}$ is represented by the matrix 
	$$
	\begin{pmatrix}[cccc|c]
		\alpha_1 & 0 & \alpha_2 & 0 & 0\\
		0 & \beta_1 & 0 & \alpha_2 & 0 \\
		-\alpha_2 & 0 & \alpha_1 & 0 & 0 \\
		0 & -\alpha_2 & 0 & \beta_1 & 0 \\
		\hline
		\mu_1 & \nu_1 & \mu_2 & \nu_2 & \alpha_1+\beta_1 \\
	\end{pmatrix}
	$$
	if $a \neq 0$ and by 
		$$
	\begin{pmatrix}[cccc|c]
		 \alpha_1 & \delta & \alpha_2 & 0 & 0\\
		\delta' & \beta_1 & 0 & \alpha_2 & 0 \\
		-\alpha_2 & 0 &  \alpha_1 & \delta & 0 \\
		0 & -\alpha_2 & \delta' & \beta_1 & 0 \\
		\hline
		\mu_1 & \nu_1 & \mu_2 & \nu_2 & \alpha_1+\beta_1 \\
	\end{pmatrix}
	$$
	if $a=0$. Then
	\begin{itemize}
		\item if $a \neq 0$, $\operatorname{Der}(\mathfrak{l}_{5}^{R})$ is generated by the set
		$$
		\lbrace x,y,E,A_1,A_2,B_1,B_2 \rbrace,
		$$
		where $x=e_{11}+e_{33}+e_{55}$, $y=e_{22}+e_{44}+e_{55}$, $E=e_{13}+e_{24}-e_{31}-e_{42}$, $A_i=e_{5,2i-1}$ and $B_i=e_{5,2i}$, for every $i=1,2$, and the non-trivial Lie brackets are
		\begin{align*}
			&[x,B_i]=B_i, \; [y,A_i]=A_i, \; \; \forall i=1,2,\\
			&[E,A_1]=-A_2, \; [E,A_2]=-A_1,\\
			&[E,B_1]=-B_2, \; [E,B_2]=-B_1.
		\end{align*}
		
		Then we have a solvable Lie algebra with abelian commutator ideal generated by
		$$
		\lbrace A_1,A_2,B_1,B_2 \rbrace,
		$$
		which coincides with the ideal $\operatorname{Inn}(\mathfrak{l}_5^R)$ and with the nilradical of the Lie algebra itself. Moreover the center $\operatorname{Z}(\operatorname{Der}(\mathfrak{l}_{5}^{R}))$ is trivial.
		\item if $a = 0$, a basis of $\operatorname{Der}(\mathfrak{l}_{5}^{R})$ is
		$$
		\lbrace x,y,E,F,G,A_1,A_2,B_1,B_2 \rbrace,
		$$
		where $x,y,E,A_1,A_2,B_1,B_2$ are defined as above, $F=e_{12}+e_{34}$, $G=e_{21}+e_{43}$ and the non-trivial Lie brackets are given by the ones above and by
		\begin{align*}
			&[x,F]=F, \; [x,G]=-G, \;
			[y,F]=-F,\;[y,G]=-G,\\
			&[F,G]=x-y, \; [F,A_i]=-B_i, \; [F,B_i]=-A_i, \; \, \forall i=1,2.
		\end{align*}
		It follows that $\operatorname{Z}(\operatorname{Der}(\mathfrak{l}_{5}^{R}))=0$ and the Lie algebra is not solvable. Its radical is given by the ideal
		$$
		R=\langle x+y,E,A_1,A_2,B_1,B_2 \rangle,
		$$
		a Levi complement is the semisimple Lie algebra
		$$
		S=\langle x-y,F,G \rangle
		$$
		and the nilradical of $\operatorname{Der}(\mathfrak{l}_{5}^{R})$ is the abelian four-dimensional algebra
		$$
		N=\langle A_1,A_2,B_1,B_2 \rangle \cong \mathbb{R}^4
		$$
		and again it coincides with the set of inner derivations of $\mathfrak{l}_{5}^{R}$.
		\end{itemize}

	Now let
	$$
	\begin{pmatrix}
		\alpha & 0 & 0 \\
		0 & \beta & 0 \\
		\mu & \nu & \alpha+\beta \\
	\end{pmatrix} 
	$$
	be a derivation of the complex Heinseberg algebra $\mathfrak{l}_3^z$, where $z=a+ib$. Then its realification is represented by the matrix
	$$
	\begin{pmatrix}[cccc|c]
		\Re(\alpha) & \Im(\alpha) & 0 & 0 & 0\\
		-\Im(\alpha) & \Re(\alpha) & 0 & 0 & 0 \\
		0 & 0 & \Re(\beta) & \Im(\beta) & 0 \\
		0 & 0 & -\Im(\beta) & \Re(\beta) & 0 \\
		\hline
		\mu_1 & \nu_1 & \mu_2 & \nu_2 & \gamma \\
	\end{pmatrix}
	$$
	and this is a derivation of the real Heisenberg Leibniz algebra $\mathfrak{l}_{5}^{R}$ if and only if
	$$
	\alpha=\beta \in \mathbb{R}
	$$
	in both cases that $a \neq 0$ or $a=0$. Then the set of realifications of the derivations of $\mathfrak{l}_3^z$ that are derivations of the real algebra $\mathfrak{l}_{5}^{R}$ form the proper Lie subalgebra of the matrices of the form
	$$
\begin{pmatrix}[cccc|c]
	\alpha & 0 & 0 & 0 & 0\\
	0 & \alpha & 0 & 0 & 0 \\
	0 & 0 & \alpha & 0 & 0 \\
	0 & 0 & 0 & \alpha & 0 \\
	\hline
	\mu_1 & \nu_1 & \mu_2 & \nu_2 & 2\alpha \\
\end{pmatrix}
$$
	of $\operatorname{Der}(\mathfrak{l}_{5}^{R})$.
	
	\section{Derivations of the Kronecker Leibniz algebra $\mathfrak{k}_n$}
	
	Now we return to the case that $\F$ is a field with $\operatorname{char}(\F)\neq 2$. Let $n \in \mathbb{N}$ and let $\mathfrak{k}_n$ be the \emph{Kronecker Leibniz algebra}. We fix the basis  $\lbrace e_1,\ldots,e_n,f_1,\ldots,f_n,z\rbrace$ of $\mathfrak{k}_n$.
	
	\begin{thm}
		A linear endomorphism $d\colon \mathfrak{k}_n \rightarrow \mathfrak{k}_n$ is a derivation of the Kronecker algebra $\mathfrak{k}_{n}$ if and only if it has the form
		$$
		\begin{pmatrix}
			A & C & 0 \\
			B & D & 0 \\
			\mu & \nu & \gamma \\
		\end{pmatrix}
		$$
		where
		$$
		A=\left(
		\begin{array}{*5{c}}
			\alpha_1 & \alpha_2 & \alpha_3& \ldots & \alpha_n \\
			0 & \alpha_1 & \alpha_2 & \ldots & \alpha_{n-1}  \\
			0 &  0 &\alpha_1 & \ldots & \alpha_{n-2}  \\
			\vdots &  \vdots &  \vdots & \ddots & \vdots \\
			0 &  0 &  0 &  \ldots & \alpha_1 \\
		\end{array}\right), \; \;
		D= \left(
		\begin{array}{*5{c}}
			\beta_1 & 0 & 0 & \ldots & 0 \\
			-\alpha_2 & \beta_1 & 0 & \ldots & 0  \\
			-\alpha_3 &  -\alpha_2 &\beta_1 & \ldots & 0  \\
			\vdots &  \vdots &  \vdots & \ddots & \vdots \\
			-\alpha_n &  -\alpha_{n-1} &  -\alpha_{n-2} &  \ldots & \beta_1 \\
		\end{array}\right),
		$$
		\\
		$$
		B=\left(
		\begin{array}{*9{c}}
			0 & 0 & 0 & 0 & \ldots & 0 & 0 & 0 & b_{n+1} \\
			0 & 0 & 0 & 0 & \ldots & 0 & 0 & -b_{n+1} & 0  \\
			0 &  0 & 0 & 0 & \ldots & 0 & b_{n+1} & 0 & b_{n+3}\\
			0 & 0 & 0 & 0 & \ldots & -b_{n+1} & 0 & -b_{n+3} & 0\\
			\vdots &  \vdots &  \vdots & \vdots &\iddots & \vdots & \vdots & \vdots & \vdots \\
			0 & 0 & 0 & b_{n+1} & \ldots & 0 & b_{2n-5} & 0 & b_{2n-3}\\
			0 & 0 & -b_{n+1} & 0 & \ldots & -b_{2n-5} & 0 & -b_{2n-3} & 0\\
			0 &  b_{n+1} & 0 & b_{n+3}&  \ldots & 0 & b_{2n-3} & 0 & b_{2n-1} \\
			 -b_{n+1} & 0 & -b_{n+3} & 0 &  \ldots & -b_{2n-3} & 0 & -b_{2n-1} & 0 \\
		\end{array}\right), \; \;
	$$
    $$
		C=\left(
		\begin{array}{*9{c}}
			0 & c_3 & 0 & c_5 & \ldots & 0 & c_{n-1} & 0 & c_{n+1} \\
			-c_3 & 0 & -c_5 & 0 & \ldots & -c_{n-1} & 0 & -c_{n+1} &0  \\
			0 &  c_5 & 0 & c_7 & \ldots & 0 & c_{n+1} & 0 & 0\\
			-c_5 & 0 & -c_7 & 0 & \ldots & -c_{n+1} & 0 & 0 & 0\\
			\vdots &  \vdots &  \vdots & \vdots &\iddots & \vdots & \vdots & \vdots & \vdots \\
			0 & c_{n-1} & 0 & c_{n+1} & \ldots & 0 & 0 & 0 & 0\\
			-c_{n-1} & 0 & -c_{n+1} & 0 & \ldots & 0 & 0 & 0 & 0\\
			0 &  c_{n+1} &  0 & 0 &  \ldots & 0 & 0 & 0 & 0 \\
			-c_{n+1} &  0 & 0 & 0 &  \ldots & 0 & 0 & 0 & 0 \\
		\end{array}\right), \; \;
		$$
		if $n$ is even,
		$$
		B=\left(
		\begin{array}{*9{c}}
			0 & 0 & 0 & 0 & \ldots & 0 & 0 & 0 & 0 \\
			0 & 0 & 0 & 0 & \ldots & 0 & 0 & 0 & -b_{n+2}  \\
			0 &  0 & 0 & 0 & \ldots & 0 & 0 & b_{n+2} & 0\\
			0 & 0 & 0 & 0 & \ldots & 0 & -b_{n+2} & 0 & -b_{n+4}\\
			\vdots &  \vdots &  \vdots & \vdots &\iddots & \vdots & \vdots & \vdots & \vdots \\
			0 & 0 & 0 & 0 & \ldots & 0 & -b_{2n-5} & 0 & -b_{2n-3}\\
			0 & 0 & 0 & b_{n+2} & \ldots & b_{2n-5} & 0 & b_{2n-3} & 0\\
			0 &  0 &  -b_{n+2} & 0 &  \ldots & 0 & -b_{2n-3} & 0 & -b_{2n-1} \\
			0 &  b_{n+2} & 0 & b_{n+4} &  \ldots & b_{2n-3} & 0 & b_{2n-1} & 0 \\
		\end{array}\right), \; \;
	$$
	$$
		C=\left(
		\begin{array}{*9{c}}
			0 & c_3 & 0 & c_5 & \ldots & c_{n-2} & 0 & c_{n} & 0 \\
			-c_3 & 0 & -c_5 & 0 & \ldots & 0 & -c_n & 0 &0  \\
			0 &  c_5 & 0 & c_7 & \ldots & c_n & 0 & 0 & 0\\
			-c_5 & 0 & -c_7 & 0 & \ldots & 0 & 0 & 0 & 0\\
			\vdots &  \vdots &  \vdots & \vdots &\iddots & \vdots & \vdots & \vdots & \vdots \\
			-c_{n-2} & 0 & -c_n & 0 & \ldots & 0 & 0 & 0 & 0\\
			0 & c_n & 0 & 0 & \ldots & 0 & 0 & 0 & 0\\
			-c_n &  0 &  0 & 0 &  \ldots & 0 & 0 & 0 & 0 \\
			0 &  0 & 0 & 0 &  \ldots & 0 & 0 & 0 & 0 \\
		\end{array}\right), \; \;
		$$
		if $n$ is odd and
		$$
		\mu=(\mu_1,\mu_2,\mu_3,\ldots,\mu_n), \; \; \nu=(\nu_1,\nu_2,\nu_3,\ldots,\nu_n), \; \; \gamma=\alpha_1+\beta_1.
		$$ 
	\end{thm}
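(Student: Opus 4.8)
The plan is to impose the derivation identity on every pair of basis vectors and solve the resulting linear system for the entries of the matrix of $d$. Write $d(z)=\gamma z$; this is forced because $d([\mathfrak{k}_n,\mathfrak{k}_n])\subseteq[\mathfrak{k}_n,\mathfrak{k}_n]=\F z$, as recalled at the end of Section 1. For a general linear endomorphism put $d(e_p)=\sum_j a_{jp}e_j+\sum_k b_{kp}f_k+\mu_p z$ and $d(f_p)=\sum_j c_{jp}e_j+\sum_k d_{kp}f_k+\nu_p z$, so that $A=(a_{jp})$, $B=(b_{kp})$, $C=(c_{jp})$, $D=(d_{kp})$ are the four $n\times n$ blocks and $\mu,\nu$ form the bottom row. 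The crucial simplification is that $z\in\operatorname{Z}(\mathfrak{k}_n)$, by the Proposition on nilpotent Leibniz algebras with $\dim_\F L^{(1)}=1$: in the identity $d([x,y])=[d(x),y]+[x,d(y)]$ the $z$-components of $d(x)$ and $d(y)$ are annihilated by every bracket, hence never appear on the right-hand side. Therefore $\mu$ and $\nu$ are completely free, and the derivation condition constrains only $A,B,C,D$ and $\gamma$.

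Next I would evaluate the identity on the four types of basis pairs, reading off the structure constants from the bracket table $[e_i,f_i]=[f_i,e_i]=z$, $[e_i,f_{i-1}]=z$, $[f_{i-1},e_i]=-z$ (all others zero). Since every bracket lands in $\F z$, each pair produces a single scalar equation. The pairs $[e_p,e_q]=0$ involve only the $f$-components of $d(e_p),d(e_q)$ and thus yield a closed family of relations in the entries of $B$ alone; symmetrically $[f_p,f_q]=0$ gives a closed family in $C$. The mixed pairs $[e_p,f_q]$ and $[f_p,e_q]$ pick out the $e$-component of the $e$-images and the $f$-component of the $f$-images, producing relations that couple $A$, $D$ and $\gamma$; for instance $[e_p,f_q]$ forces $a_{qp}+a_{q+1,p}+d_{pq}+d_{p-1,q}$ to equal $\gamma$ precisely when $q\in\{p,\,p-1\}$ and to equal $0$ otherwise, with out-of-range indices read as zero.

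The equations coupling $A$, $D$ and $\gamma$ can then be solved exactly as in the $\mathfrak{l}_{2n+1}^{J_a}$, $a\neq 0$, case of Section 2: they force $A$ to be the upper-triangular Toeplitz matrix with first row $(\alpha_1,\ldots,\alpha_n)$, force $D$ to be lower-triangular with diagonal $\beta_1$ and subdiagonal entries $-\alpha_2,-\alpha_3,\ldots$, and give $\gamma=\alpha_1+\beta_1$, i.e.\ the same blocks as for the Heisenberg algebras. The genuinely new part is the resolution of the $B$ and $C$ families. Here the \emph{antisymmetry} of the off-diagonal brackets ($[e_i,f_{i-1}]=z$ but $[f_{i-1},e_i]=-z$), opposite to the symmetric off-diagonal brackets of the Heisenberg case, is what produces the persymmetric, alternating-sign, anti-triangular patterns displayed in the statement: the diagonal relations already give $b_{pp}=c_{pp}=0$, and the remaining coupled recurrences are solved by propagating a single free parameter with alternating signs along each anti-diagonal.

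The step I expect to be the main obstacle is exactly this last one. The recurrences for $B$ and $C$ must be unwound with careful index bookkeeping, and whether the free entries land on the ``even'' or the ``odd'' anti-diagonals depends on the parity of $n$, which is precisely why the statement splits into the cases $n$ even and $n$ odd. Once the homogeneous system is fully solved, the converse direction is routine: substituting a matrix of the prescribed shape back into $d([x,y])=[d(x),y]+[x,d(y)]$ and checking the identity on all basis pairs (again using that $z$ is central, so $\mu,\nu,\gamma$ cause no new constraints) shows that every such matrix is indeed a derivation, completing the equivalence.
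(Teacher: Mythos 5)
Your proposal is correct and follows essentially the only available route, which is also the one the paper (implicitly) takes: the theorem is stated in the paper without a printed proof, the result being obtained by imposing $d([x,y])=[d(x),y]+[x,d(y)]$ on all pairs of basis vectors and solving the resulting linear system, exactly as you describe. Your reduction is sound in every checkable detail --- the centrality of $z$ freeing $\mu,\nu$, the decoupling of the $B$- and $C$-equations from the $A,D,\gamma$-equations, the explicit relation $a_{qp}+a_{q+1,p}+d_{pq}+d_{p-1,q}=\gamma\,(\delta_{qp}+\delta_{q,p-1})$, and the observation that the sign asymmetry $[e_i,f_{i-1}]=z$, $[f_{i-1},e_i]=-z$ is what produces the alternating anti-triangular patterns and the parity split --- so the only thing separating your sketch from a complete proof is the routine unwinding of the $B$ and $C$ recurrences, which the paper does not exhibit either.
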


\begin{rem}
	Again, with the change of basis 
	$$
	\left\{e_1,\ldots,e_n,f_1, \ldots,f_n,z\right\} \mapsto \left\{e_1,f_1,\ldots,e_n,f_n,z\right\},
	$$
	a derivation of $\mathfrak{k}_{n}$ is represented by the $(2n+1) \times (2n+1)$ matrix
	\begin{equation*}
		\setcounter{MaxMatrixCols}{12}
		\begin{pmatrix}[ccccccccccc|c]
			\alpha_1 & 0 & -\alpha_2 & c_3 & -\alpha_3 & 0 &  \cdots & -\alpha_{n-1} & 0 & -\alpha_{n} & c_{n+1} & 0 \\
			0 & \beta_1 & 0 & 0 & 0 & 0 & \cdots & 0 & 0 & b_{n+1} & 0 & 0 \\
			0 & -c_3 & \alpha_1 & 0 & -\alpha_2 & -c_5 & \cdots & -\alpha_{n-2} & -c_{n+1} &  -\alpha_{n-1} &  0 & 0 \\
			0 & \alpha_2 & 0 & \beta_1 & 0 & 0 & \cdots & -b_{n+1} & 0 & 0  & 0 & 0 \\
			0 & 0 & 0 & c_5 & \alpha_1 & 0 & \cdots & -\alpha_{n-3} & 0 &  -\alpha_{n-2}  & 0 & 0 \\
			0 & \alpha_1 & 0 & \alpha_2  & 0 & \beta_1 & \cdots & 0 & 0& -b_{n+3}  & 0 & 0 \\
			\vdots & \vdots & \vdots & \vdots & \vdots & \vdots & \ddots & \vdots & \vdots & \vdots & \vdots & \vdots \\
			0 & 0 & 0 & c_{n+1} & 0 & 0 & \cdots & \alpha_1 & 0 & -\alpha_2 & 0 & 0 \\
			0 & \alpha_{n-1} & b_{n+1} & \alpha_{n-2} & 0 & \alpha_{n-3} & \cdots & 0  & \beta_1 & -b_{2n-1} & 0 & 0 \\
			0 & -c_{n+1} & 0 & 0 & 0 & 0 & \cdots & 0  & 0 & \alpha_1 & 0 & 0 \\
			-b_{n+1} & \alpha_{n} & 0 & \alpha_{n-1}  & b_{n+3} & \alpha_{n-2} & \cdots & b_{2n-1} & \alpha_2 & 0 & \beta_1 & 0 \\
			\hline 
			\mu_1 & \nu_1 & \mu_2 & \nu_2 & \mu_3 & \nu_3 &\cdots & \mu_{n-1}  & \nu_{n-1} & \mu_n & \nu_n & \alpha_1+\beta_1
		\end{pmatrix}
	\end{equation*}
	if $n$ is even and
	\begin{equation*}
		\setcounter{MaxMatrixCols}{12}
		\begin{pmatrix}[ccccccccccc|c]
			\alpha_1 & 0 & -\alpha_2 & c_3 & -\alpha_3 & 0 &  \cdots & -\alpha_{n-1} & c_n & -\alpha_{n} & 0 & 0 \\
			0 & \beta_1 & 0 & 0 & 0 & 0 & \cdots & 0 & 0 & 0 & 0 & 0\\
			0 & -c_3 & \alpha_1 & 0 & -\alpha_2 & -c_5 & \cdots & -\alpha_{n-2} & 0 & -\alpha_{n-1} & 0 & 0 \\
			0 & \alpha_2 & 0 & \beta_1 & 0 & 0 & \cdots & 0 & 0 & -b_{n+2} & 0 & 0 \\
			0 & 0 & 0 & -c_5 & \alpha_1 & 0 & \cdots & -\alpha_{n-3} & 0 & -\alpha_{n-2} & 0 & 0 \\
			0 & \alpha_3 & 0 & \alpha_2  & 0 & \beta_1 & \cdots & b_{n+2} & 0 & 0 & 0 & 0\\
			\vdots & \vdots & \vdots & \vdots & \vdots & \vdots & \ddots & \vdots & \vdots & \vdots & \vdots & \vdots \\
			0 & c_n & 0 & 0 & 0 & 0 & \cdots & \alpha_1 & 0 & -\alpha_2 & 0 & 0 \\
			0 & \alpha_{n-1} & 0 & \alpha_{n-2} & -b_{n+2} & \alpha_{n-3} & \cdots & 0 & \beta_1 & -b_{2n-1} & 0 & 0 \\
			0 & 0 & 0 & 0 & 0 & 0 & \cdots & 0 & 0 & \alpha_1 & 0 & 0 \\
			0 & \alpha_{n} & b_{n+2} & \alpha_{n-1} & 0 & \alpha_{n-2}  & \cdots & b_{2n-1} & \alpha_2 & 0 & \beta_1 & 0 \\
			\hline
			\mu_1 & \nu_1 & \mu_2 & \nu_2 & \mu_3 & \nu_3 &\cdots & \mu_{n-1}  & \nu_{n-1} & \mu_n & \nu_n & \alpha_1+\beta_1
		\end{pmatrix}
	\end{equation*}
	if $n$ is odd.
\end{rem}
	
 We can now describe the main properties of the Lie algebra $\operatorname{Der}(\mathfrak{k}_n)$.
	\begin{itemize}
		\item If $n$ is even, then $\operatorname{Der}(\mathfrak{k}_{n})$ has basis
		$$
		\lbrace x,y,E_1,\ldots,E_{n-1},c_3,c_5,\ldots,c_{n+1},b_{n+1},b_{n+3}\ldots,b_{2n-1},A_1,\ldots,A_n,B_1,\ldots,B_n \rbrace,
		$$
		where
		\begin{align*}
		&x=\sum_{k=1}^{n} e_{2k-1,2k-1}+e_{2k+1,2k+1}, \; \; \\
		&y=\sum_{k=1}^{n} e_{2k,2k}+e_{2k+1,2k+1},\\
		&E_i=\sum_{k=0}^{n-i+1}(e_{2(k+i+1),2(k+1)}-e_{2k+1,2(k+i)+1}), \; \; \forall i=1,\ldots,n-1,\\
		&A_i=e_{2n+1,2i-1}, \; \; B_i=e_{2n+1,2i}, \; \; \forall i=1,\ldots,n,\\
		&c_h=\sum_{i=0}^{h-2}(-1)^{i+1} e_{2(h-i-1)-1,2(1+i)}, \; \; \forall h=3,5,\ldots,n+1,\\
		&b_h=\sum_{i=0}^{2n-h}(-1)^{i+1} e_{2(n-i),2(h-n+i)-1}, \; \; \forall h=n+1,n+3,\ldots,2n-1,
		\end{align*}
		and
		\begin{align*}
		&[x,B_i]=B_i, \; [y,A_i]=A_i, \; \; \forall i=1,\ldots,n,\\
		&[E_i,B_k]=-B_{k-2i}, \; \; k > 2i,\\
		&[E_i,A_k]=A_{k+2i}, \; \;  k+2i \leq 2n,\\
		&[x,c_{h}]=c_{h}, \; [y,c_{h}]=-c_h, \; \; \forall h=3,5,\ldots,n+1,\\
		&[x,b_{h}]=-b_{h}, \; [y,b_{h}]=b_h, \; \; \forall h=n+1,n+3,\ldots,2n-1,\\
		&[A_i,c_k]=(-1)^{i+1}B_{k-i}, \; [B_i,b_k]=(-1)^{i+1}A_{k-i}, \; \; 1 \leq k-i \leq n,\\
		&[c_k,b_h]=E_{h-k}, \; \; h-k \geq 1,\\
		&[c_{n+1},b_{n+1}]=-x+y,\\
		&[E_{2i},c_h]=-2c_{h-2i}, \; \; h-2i >0,\\
		&[E_{2i},b_h]=2b_{h+2i}, \; \;  h+2i \leq 2n.
		\end{align*}
		The commutator ideal of $\operatorname{Der}(\mathfrak{k}_{n})$ has basis
		$$
		\lbrace x-y,E_2,E_4,\ldots,E_{n-2},c_3,c_5,\ldots,c_{n+1},b_{n+1},b_{n+3}\ldots,b_{2n-1},A_1,\ldots,A_n,B_1,\ldots,B_n \rbrace 
		$$
		and, as in the case of the Heisenberg Leibniz algebra $\mathfrak{l}_{2n+1}^{J_0}$ with $n$ odd, we have that the Lie algebra of derivations is not solvable. The Levi decomposition is
		$$
		\operatorname{Der}(\mathfrak{k}_{n})=R \rtimes S,
		$$
		where
		$$
		R=\langle x+y,E_1,\ldots,E_{n-1},c_3,c_5,\ldots,c_{n-1},b_{n+3},b_{n+5}\ldots,b_{2n-1},A_1,\ldots,A_n,B_1,\ldots,B_n \rangle
		$$
		is the radical and
		$$
		S=\langle x-y,c_{n+1},b_{n+1}\rangle
		$$
		is a Levi complement. Moreover the nilradical of $		\operatorname{Der}(\mathfrak{k}_{n})$ is 
		$$
		N=\langle E_1,\ldots,E_{n-1},c_3,c_5,\ldots,c_{n-1},b_{n+3},b_{n+5}\ldots,b_{2n-1},A_1,\ldots,A_n,B_1,\ldots,B_n \rangle.
		$$
		
		\item If $n$ is odd, then the algebra of derivations of $\mathfrak{k}_{n}$ has dimension $4n$ and it is generated by
		$$
		\lbrace x,y,E_1,\ldots,E_{n-1},c_3,c_5,\ldots,c_{n},b_{n+2},b_{n+4}\ldots,b_{2n-1},A_1,\ldots,A_n,B_1,\ldots,B_n \rbrace.
		$$
		The Lie brackets are the same of the ones listed before when $n$ is even, except for the facts that $x-y$ does not belong to the commutator ideal and $[B_i,b_k]=(-1)^{i}A_{k-i}$, for any $1 \leq k-i \leq n$. In this case the commutator ideal is the subspace generated by
		$$
		\lbrace E_2,E_4,\ldots,E_{n-1},c_3,c_5,\ldots,c_{n},b_{n+2},b_{n+4}\ldots,b_{2n-1},A_1,\ldots,A_n,B_1,\ldots,B_n \rbrace
		$$
		and we have a $(\frac{n+1}{2}+1)-$step solvable Lie algebra with nilradical
		$$
		N=\langle E_1,\ldots,E_{n-1},c_3,c_5,\ldots,c_{n},b_{n+2},b_{n+4}\ldots,b_{2n-1},A_1,\ldots,A_n,B_1,\ldots,B_n \rangle.
		$$
	\end{itemize}
In both cases $n$ is odd or even,  the center $\operatorname{Z}(\operatorname{Der}(\mathfrak{l}_{2n+1}^{J_0}))$ is trivial, the Lie algebra of inner derivations is 
$$
\operatorname{Inn}(\mathfrak{k}_n)=\langle A_1,\ldots,A_n,B_1,\ldots,B_n \rangle \cong \F^{2n},
$$
since
$$
\operatorname{ad}_{e_i}=B_{i-1}+B_i, \; \operatorname{ad}_{f_i}=A_i+A_{i+1}, \; \; \forall i=1,\ldots,n
$$
and
$$
\operatorname{Der}(\mathfrak{h}_{2n+1}) \supseteq 
\operatorname{Der}(\mathfrak{k}_{n}) \supseteq \operatorname{Der}(\mathfrak{l}_{2n+1}^{J_a}),
$$
for any $a \neq 0$. More precisely
$$
\operatorname{Der}(\mathfrak{l}_{2n+1}^{J_0}) \cap \operatorname{Der}(\mathfrak{k}_{n}) = \operatorname{Der}(\mathfrak{l}_{2n+1}^{J_a}).
$$

	\section{Derivations of the Dieudonné Leibniz algebra $\mathfrak{d}_n$}

    Finally we study the derivations of the \emph{Dieudonné Leibniz algebra} $\mathfrak{d}_n$. We fix the basis  $\lbrace e_1,\ldots,e_{2n+1},z\rbrace$ of $\mathfrak{d}_n$.

\begin{thm}
	A derivation of $\mathfrak{d}_n$ is represented by a $(2n+2) \times (2n+2)$ matrix
	\begin{equation}
		$$\[
		\left(
		\begin{array}{c|c|c}
			&  & 0 \\
			\alpha I_{n+1} & C & \vdots \\
			& & 0 \\
			\hline
			&  & 0 \\
			0 & \beta I_n & \vdots \\
			& & 0 \\
			\hline
			\mu_1 \ldots \mu_{n+1} & \nu_1 \ldots  \nu_n & \alpha+\beta
		\end{array}
		\right)
		\]$$
	\end{equation}
	where the $(n+1) \times n$ matrix $C$ is
	$$
	\begin{pmatrix}
		\alpha_1 & 0 & \alpha_2 & 0 & \alpha_3 & 0 & \cdots & \alpha_{\frac{n}{2}} & 0\\
		0 & -\alpha_2 & 0 & -\alpha_3 & 0 & -\alpha_4 & \cdots & 0 & -\alpha_{\frac{n}{2}+1} \\
		\alpha_2 & 0 & \alpha_3 & 0 & \alpha_4 & 0 & \cdots & \alpha_{\frac{n}{2}+1} & 0\\
		0 & -\alpha_3 & 0 & -\alpha_4 & 0 & -\alpha_5 & \cdots & 0 & -\alpha_{\frac{n}{2}+2} \\
		\vdots & \vdots & \vdots & \vdots & \vdots & \vdots &  & \vdots & \vdots \\
		0 & -\alpha_{\frac{n}{2}+1} & 0 & -\alpha_{\frac{n}{2}+2} & 0 & -\alpha_{\frac{n}{2}+3} & \cdots & 0 & -\alpha_n \\
		\alpha_{\frac{n}{2}+1} & 0 & \alpha_{\frac{n}{2}+2} & 0 & \alpha_{\frac{n}{2}+3} & 0 & \cdots & \alpha_n & 0 \\
	\end{pmatrix}
	$$ 
	if $n$ is even and
	$$
	\begin{pmatrix}
		\alpha_1 & 0 & \alpha_2 & 0 & \alpha_3 & 0 & \cdots & 0 & \alpha_{\frac{n+1}{2}}\\
		0 & -\alpha_2 & 0 & -\alpha_3 & 0 & -\alpha_4 & \cdots & -\alpha_{\frac{n+1}{2}} & 0 \\
		\alpha_2 & 0 & \alpha_3 & 0 & \alpha_4 & 0 & \cdots & 0 & \alpha_{\frac{n+1}{2}+1}\\
		0 & -\alpha_3 & 0 & -\alpha_4 & 0 & -\alpha_5 & \cdots & -\alpha_{\frac{n+1}{2}+1} & 0 \\
		\vdots & \vdots & \vdots & \vdots & \vdots & \vdots & & \vdots & \vdots \\
		\alpha_{\frac{n+1}{2}} & 0 & \alpha_{\frac{n+1}{2}+1} & 0 & \alpha_{\frac{n+1}{2}+2} & 0 & \cdots & 0 & \alpha_n \\
		0 & -\alpha_{\frac{n+1}{2}+1} & 0 & -\alpha_{\frac{n+1}{2}+2} & 0 & -\alpha_{\frac{n+1}{2}+3} & \cdots & -\alpha_n & 0 \\
	\end{pmatrix}
	$$ 
	if $n$ is odd.
\end{thm}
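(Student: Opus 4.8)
The plan is to reduce the derivation identity to a single matrix equation and then exploit the bidiagonal (Kronecker-pencil) shape of the structure constants of $\mathfrak{d}_n$. Write $V=\langle e_1,\ldots,e_{2n+1}\rangle$ and split it as $V=V_1\oplus V_2$ with $V_1=\langle e_1,\ldots,e_{n+1}\rangle$ and $V_2=\langle e_{n+2},\ldots,e_{2n+1}\rangle$. Since $[L,L]=\F z$ is one-dimensional and $d([L,L])\subseteq[L,L]$, we have $d(z)=\gamma z$ for some $\gamma\in\F$, and by the preceding proposition $z$ is central. Writing $d(e_j)=\sum_i p_{ij}e_i+(\text{a multiple of }z)$, the central element $z$ contributes nothing to any bracket, so the $z$-components of the $d(e_j)$ — the entries $\mu_1,\ldots,\mu_{n+1},\nu_1,\ldots,\nu_n$ of the last row — never appear in the derivation identities evaluated on pairs $(e_i,e_j)$. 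This immediately shows the last row is free and reduces the problem to the linear part $P=(p_{ij})$ acting on $V$.

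Next I would encode the brackets by the matrix $\Omega$ defined by $[e_i,e_j]=\Omega_{ij}z$ and check that the derivation identity on all pairs $(e_i,e_j)$ is equivalent to the single equation $P^{\top}\Omega+\Omega P=\gamma\,\Omega$. The relations of $\mathfrak{d}_n$ show that $\Omega$ has vanishing $V_1\times V_1$ and $V_2\times V_2$ blocks, while its two off-diagonal blocks $\Omega_{12}$ (of size $(n+1)\times n$) and $\Omega_{21}$ (of size $n\times(n+1)$) are the rectangular bidiagonal matrices $(\Omega_{12})_{ij}=\delta_{ij}+\delta_{i,j+1}$ and $(\Omega_{21})_{ij}=\delta_{j,i+1}-\delta_{ij}$. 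Writing $P$ in the corresponding block form and separating $P^{\top}\Omega+\Omega P=\gamma\Omega$ into its four blocks gives, from the two off-diagonal blocks, $\Omega_{12}P_{22}+P_{11}^{\top}\Omega_{12}=\gamma\Omega_{12}$ and $\Omega_{21}P_{11}+P_{22}^{\top}\Omega_{21}=\gamma\Omega_{21}$, and, from the two diagonal blocks, $\Omega_{12}P_{21}+P_{21}^{\top}\Omega_{21}=0$ and $\Omega_{21}P_{12}+P_{12}^{\top}\Omega_{12}=0$.

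I would then solve these in turn. The two off-diagonal equations intertwine $P_{11}$ and $P_{22}$ with the two staircase matrices; propagating the resulting entrywise relations along the sub- and super-diagonals forces $P_{11}=\alpha I_{n+1}$ and $P_{22}=\beta I_n$ to be scalar and imposes $\gamma=\alpha+\beta$. The equation $\Omega_{12}P_{21}+P_{21}^{\top}\Omega_{21}=0$ then forces the lower-left block $P_{21}$ (the $V_1\to V_2$ part) to vanish. Finally, the remaining equation $\Omega_{21}C+C^{\top}\Omega_{12}=0$ for $C=P_{12}$ becomes, entrywise, the recurrence $C_{i+1,j}-C_{i,j}+C_{j,i}+C_{j+1,i}=0$, whose solution space is exactly the claimed checkerboard matrix with free parameters $\alpha_1,\ldots,\alpha_n$. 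Conversely, substituting this data back confirms that every matrix of the stated form is a derivation, so the two conditions are equivalent.

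The main obstacle is this last step. Because $\Omega_{12}$ and $\Omega_{21}$ are not square they are not invertible, so one cannot simply solve for $C$; instead one must follow the recurrence inward from the boundary of the rectangular array, and the way it terminates depends on the parity of $n$. This is precisely what produces the two different explicit shapes of $C$, and keeping track of the alternating signs together with which diagonal each free parameter $\alpha_k$ occupies — separately for $n$ even and $n$ odd — is the only genuinely delicate bookkeeping. The analogous care is needed to ensure that the off-diagonal block equations really collapse $P_{11},P_{22}$ to scalars rather than leaving a nontrivial Toeplitz part; everything else is routine linear algebra.
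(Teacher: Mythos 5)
Your proposal is correct, and it is worth noting that the paper itself states this theorem without any written proof (the authors rely on direct coordinate computation, aided by Maple, as they indicate in the introduction). Your reformulation is a genuinely different and more structural route: encoding the brackets by $\Omega$ with $[e_i,e_j]=\Omega_{ij}z$ and using centrality of $z$ (which follows from the proposition on nilpotent Leibniz algebras with one-dimensional commutator ideal) correctly reduces the derivation identity to $P^{\top}\Omega+\Omega P=\gamma\Omega$, the last row being free exactly because $z$ never contributes to a bracket. Your identification of the blocks is right: $(\Omega_{12})_{ij}=\delta_{ij}+\delta_{i,j+1}$ and $(\Omega_{21})_{ij}=\delta_{j,i+1}-\delta_{ij}$, and the four block equations you write down are the correct ones. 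The closing steps you flag as delicate do in fact close up cleanly, and more easily than you suggest: in the equation $\Omega_{21}C+C^{\top}\Omega_{12}=0$, i.e.\ $C_{i+1,j}-C_{ij}+C_{ji}+C_{j+1,i}=0$, symmetrizing and antisymmetrizing in $(i,j)$ splits it into $C_{ij}=C_{ji}$ (on the square part) and $C_{i+1,j}=-C_{j+1,i}$; setting $i=j$ kills the subdiagonal, hence every odd anti-diagonal, while each even anti-diagonal $i+j=2m$ carries one free parameter $\alpha_m$ with sign $(-1)^{j+1}$, which is exactly the checkerboard matrix of the statement in both parities of $n$. Likewise the two off-diagonal block equations, read entrywise along the staircase, immediately force $P_{11}=\alpha I_{n+1}$, $P_{22}=\beta I_n$, $\gamma=\alpha+\beta$, and the remaining block equation forces $P_{21}=0$. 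What your approach buys over the paper's implicit brute-force verification is that it explains \emph{why} the answer looks as it does (the Kronecker-pencil shape of $\Omega$ is what produces the alternating anti-diagonal pattern and the parity dichotomy), and it simultaneously yields both directions of the equivalence.
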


The Lie algebra $\operatorname{Der}(\mathfrak{d}_n)$ has dimension $3n+3$ and basis
$$
\lbrace x,y,E_1,\ldots,E_n,A_1,\ldots,A_{2n+1}\rbrace,
$$
where
\begin{align*}
	&x=\sum_{i=1}^{n+1} e_{ii} +e_{2n+1,2n+1}, \;\\ &y=\sum_{i=n+2}^{2n+2} e_{ii}, \; A_i=e_{2n+1,i}, \; \; \forall i=1,\ldots,2n+1,\\
	&E_i=\sum_{k=1}^{2i-1}(-1)^{k+1}e_{k,n+2i+1-k},\; \; \forall i=1,\ldots,\Big\lfloor \frac{n+1}{2} \Big\rfloor,\\
	&E_{\frac{n}{2}+j}=\sum_{k=1}^{n+2-2j}(-1)^{k+1}e_{n+2-k,n+2j-1+k},\; \; \forall j=1,\ldots, \frac{n}{2},
\end{align*}
if $n$ even, and
$$
E_{\frac{n+1}{2}+j}=\sum_{k=1}^{n+1-2j}(-1)^{k}e_{n+2-k,n+2j+k},\; \; \forall j=1,\ldots, \frac{n-1}{2},
$$
if $n$ is odd. The non-zero Lie brackets are
\begin{align*}
	&[x,E_i]=[E_i,y]=\alpha_i, \; \; \forall i=1,\ldots,n,\\
	&[y,A_h]=A_h, \; \; \forall h=1,\ldots,n+1,\\
	&[x,A_k]=A_k, \; \; \forall k=n+2,\ldots,2n+1,\\
	&[A_i,E_k]=\varepsilon_j A_j, \; \; \forall i=1,\ldots,n+1,
\end{align*}
where $\varepsilon_j=\pm 1$ is the only entry different than zero in the $i^{th}$ row of the matrix $\alpha_k$ and $j \in \lbrace n+2,\ldots,2n+1 \rbrace$ is its column. Thus  $\operatorname{Der}(\mathfrak{d}_n)$ is a $3-$step solvable Lie algebra with commutator ideal consisting of the matrices
\begin{equation}
	$$\[
	\left(
	\begin{array}{c|c|c}
		&  & 0 \\
		0 & C & \vdots \\
		& & 0 \\
		\hline
		&  & 0 \\
		0 & 0 & \vdots \\
		& & 0 \\
		\hline
		\mu_1 \ldots \mu_{n+1} & \nu_1 \ldots  \nu_n & 0
	\end{array}
	\right)
	\]$$
\end{equation}
The derived series is
$$
\operatorname{Der}(\mathfrak{d}_n)  \supseteq \langle E_1,\ldots,E_n,A_1,\ldots, A_{2n+1}\rangle \supseteq \langle A_{n+2},\ldots,A_{2n+1} \rangle \supseteq 0
$$
and the nilradical coincides with the commutator ideal (which is a two-step nilpotent Lie algebra). Finally $\operatorname{Z}(\operatorname{Der}(\mathfrak{d}_n))$ is trivial and the left adjoint maps are
$$
\operatorname{ad}_{e_1}=A_{n+2}, \; \operatorname{ad}_{e_i}=A_{n+i}+A_{n+i+1}, \; \; \forall i=2,\ldots,n,
$$
$$
\operatorname{ad}_{e_{n+1}}=A_{2n+1}, \; \operatorname{ad}_{e_j}=A_{j-n}-A_{j-n-1}, \; \; \forall j=n+2,\ldots,2n+1,
$$
thus the inner derivations of the Dieudonné algebra $\mathfrak{d}_n$ are represented by the matrices of the form
\begin{equation}
	$$\[
	\left(
	\begin{array}{c|c}
		& 0 \\
		\textbf{\Large0}   & \vdots \\
		& 0 \\
		\hline
		\mu_1 \; \mu_2 \ldots \mu_n \; \mu \; \nu_1 \ldots \nu_{n} & 0
	\end{array}
	\right)
	\]$$
\end{equation} 
where $\mu=-\displaystyle\sum_{k=1}^{n}\mu_k$. More precisely, the matrix
\begin{equation}
	$$\[
	\left(
	\begin{array}{c|c}
		& 0 \\
		\textbf{\Large0}   & \vdots \\
		& 0 \\
		\hline
		0 \ldots 0 \; \mu_{n+1} \; 0 \ldots 0 & 0
	\end{array}
	\right)
	\]$$
\end{equation} 
does not represent an inner derivation, for every $a_{n+1} \neq 0$.\\
\\ For example, we study $\operatorname{Der}(\mathfrak{d}_n)$ in the case that $n \leq 3$.

\begin{ex}
	If $n=1$, then 
	$$
	D=\begin{pmatrix}[ccc|c]
		\alpha & 0 & \alpha_1 & 0 \\
		0 & \alpha & 0 & 0  \\
		0 & 0 & \beta & 0 \\
		\hline
		\mu_1 & \mu_2 & \nu_1 & \alpha+\beta \\
	\end{pmatrix}
	$$
	thus $\operatorname{Der}(\mathfrak{d}_1)$ is the six-dimensional solvable Lie algebra with basis
	$$
	\lbrace x,y,E,A_1,A_2,A_3 \rbrace,
	$$
	where $x=e_{11}+e_{22}+e_{44},y=e_{33}+e_{44},E=e_{1,3}$ and $A_i=e_{4,i}$, for any $i=1,2,3$, and with non-trivial commutators
	$$
	[x,E]=[E,y]=E, \; [x,A_3]=A_3,\;[y,A_1]=A_1,\;[y,A_2]=A_2,\;[A_1,E]=A_3.
	$$
\end{ex}

\begin{ex}
	If $n=2$, the derivations of $\mathfrak{d}_2$ are of the form
	$$
	\begin{pmatrix}[ccccc|c]
		\alpha & 0 & 0 & \alpha_1 & 0 & 0 \\
		0 & \alpha & 0 & 0  & -\alpha_2 & 0 \\
		0 & 0 & \alpha & \alpha_2 & 0 & 0 \\
		0 & 0 & 0 & \beta & 0 & 0 \\
		0 & 0 & 0 & 0 & \beta & 0 \\
		\hline
		\mu_1 & \mu_2 & \mu_3 & \nu_1 & \nu_2 & \alpha+\beta \\
	\end{pmatrix}
	$$
	and $\operatorname{Der}(\mathfrak{d}_2)$ is a nine-dimensional Lie algebra with commutator ideal consisting of the matrices
	$$
	\begin{pmatrix}[ccccc|c]
		0 & 0 & 0 & \alpha_1 & 0 & 0 \\
		0 & 0 & 0 & 0  & -\alpha_2 & 0 \\
		0 & 0 & 0 & \alpha_2 & 0 & 0 \\
		0 & 0 & 0 & 0 & 0 & 0 \\
		0 & 0 & 0 & 0 & 0 & 0 \\
		\hline
		\mu_1 & \mu_2 & \mu_3 & \nu_1 & \nu_2 & 0 \\
	\end{pmatrix}
	$$
\end{ex}
\begin{ex}
	If $n=3$, then
	\begin{equation}
		$$\[D=
		\left(
		\begin{array}{c|c|c}
			&  & 0 \\
			\alpha I_4 & C & \vdots \\
			& & 0 \\
			\hline
			&  & 0 \\
			0 & \beta I_3 & \vdots \\
			& & 0 \\
			\hline
			\mu_1 \; \mu_2 \; \mu_3 \; \mu_{4} & \nu_1 \; \nu_2\;  \nu_3 & \alpha+\beta
		\end{array}
		\right)
		\]$$
	\end{equation}
	where
	$$
	C=\begin{pmatrix}
		\alpha_1 & 0 & \alpha_2 \\
		0 & -\alpha_2 & 0 \\
		\alpha_2 & 0 & \alpha_3 \\
		0 & -\alpha_3 & 0 \\
	\end{pmatrix}
	$$
	and the Lie algebra $\operatorname{Der}(\mathfrak{d}_3)$ has dimension $9$ with generators
	$$
	\lbrace x,y,E_1,E_2,E_3,A_1,\ldots,A_8 \rbrace,
	$$
	where 
	\begin{align*}
		&x=e_{11}+e_{22}+e_{33}+e_{44}+e_{88}, \; y=e_{55}+e_{66}+e_{77}+e_{88},\\
		&E_1=e_{1,5}, \; E_2=e_{1,7}-e_{2,6}+e_{3,5},E_3=e_{3,7}-e_{2,8},\\
		&A_i=e_{8,i}, \; \; \forall i=1,\ldots,7
	\end{align*}
	and with Lie brackets
	\begin{align*}
		&[x,E_i]=[E_i,y]=E_i, \; \; \forall i=1,2,3,\\
		&[y,A_h]=A_h, \; [x,A_k]=A_k \; \; \forall h=1,2,3,4,\;\forall k=5,6,7,\\
		&[A_1,E_1]=A_5,\;[A_1,E_2]=A_7,\;[A_2,E_2]=-A_6,\\
		&[A_3,E_2]=A_5,\;[A_3,E_3]=A_7,\;[A_4,E_3]=-A_6.
	\end{align*}

\end{ex}

\section{Almost inner derivations of nilpotent Leibniz algebras with one-dimensional commutator ideal}

We recall that a derivation $d$ of a left Leibniz algebra $L$ is an \emph{almost inner derivation} if $d(x) \in [L,x]$, for every $x \in L$. The set of all almost inner derivations of $L$ forms a Lie subalgebra of $\operatorname{Der}(L)$, denoted by $\operatorname{AIDer}(L)$, containing the ideal $\operatorname{Inn}(L)$ of inner derivations of $L$.\\

Derivations of two-step nilpotent Lie algebras were studied in \cite{Burde} and \cite{BURDE2021185} by D. Burde, K. Dekimpe and B. Verbeke. They proved that every almost inner derivations of a Lie algebra of \emph{genus} 1 (i.e.\ with one-dimensional commutator ideal) is an inner derivation. We want to generalize this result in the frame of Leibniz algebras.

\begin{prop}
	Let $L$ be a complex nilpotent Leibniz algebra with $[L,L]=\mathbb{C} z$, such that $L \not\cong \mathfrak{l}_{2n+1}^{J_{\pm 1}}$ and $L \not\cong \mathfrak{d}_{n}$. Then every almost inner derivation $d \in \operatorname{AIDer}(L)$ is an inner derivation.
\end{prop}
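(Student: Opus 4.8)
The plan is to turn the whole question into linear algebra on the one-dimensional ideal $\mathbb{C}z$. By the Proposition above we have $z\in\operatorname{Z}(L)$, so I would first encode the bracket by the bilinear form $\beta\colon L\times L\to\mathbb{C}$ determined by $[x,y]=\beta(x,y)\,z$. The point of this reformulation is that $\operatorname{Z}_l(L)$ and $\operatorname{Z}_r(L)$ are precisely the left and right radicals of $\beta$; since a bilinear form and its transpose have the same rank, this already yields the key equality $\dim\operatorname{Z}_l(L)=\dim\operatorname{Z}_r(L)$, while $\operatorname{Z}(L)=\operatorname{Z}_l(L)\cap\operatorname{Z}_r(L)$.

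Next I would locate $\operatorname{AIDer}(L)$ and $\operatorname{Inn}(L)$ inside the space of functionals. Every almost inner derivation $d$ maps $L$ into $[L,L]=\mathbb{C}z$ and annihilates $z$ (as $z$ is central), so $d=d_\varphi$ with $d_\varphi(x)=\varphi(x)z$ for some $\varphi\in L^{*}$ vanishing on $z$; conversely every such $d_\varphi$ is automatically a derivation, both terms of the Leibniz rule landing in the central line. The almost inner requirement then amounts exactly to $\varphi|_{\operatorname{Z}(L)}=0$, so $\operatorname{AIDer}(L)$ is isomorphic to the annihilator of $\operatorname{Z}(L)$ and has dimension $\dim L-\dim\operatorname{Z}(L)$. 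Since $\operatorname{ad}_a=d_{\beta(a,-)}$, the map $a\mapsto\operatorname{ad}_a$ has kernel $\operatorname{Z}_l(L)$, whence $\dim\operatorname{Inn}(L)=\dim L-\dim\operatorname{Z}_l(L)$. As $\operatorname{Inn}(L)\subseteq\operatorname{AIDer}(L)$ always, comparing dimensions and using $\dim\operatorname{Z}_l(L)=\dim\operatorname{Z}_r(L)$ gives the criterion on which I would build everything else:
$$\operatorname{AIDer}(L)=\operatorname{Inn}(L)\iff\operatorname{Z}_l(L)=\operatorname{Z}_r(L).$$

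It remains to decide when the two one-sided centers agree. First I would reduce to the indecomposable case: since $\dim[L,L]=1$, at most one indecomposable summand of $L$ is non-abelian, so $L\cong L_0\oplus\mathfrak{a}$ with $\mathfrak{a}$ abelian and $L_0$ one of the classified algebras $\mathfrak{l}_{2n+1}^{A},\mathfrak{k}_n,\mathfrak{d}_n$; because $\operatorname{Z}_l(L)=\operatorname{Z}_l(L_0)\oplus\mathfrak{a}$ and likewise for $\operatorname{Z}_r$, the condition $\operatorname{Z}_l(L)=\operatorname{Z}_r(L)$ depends only on $L_0$. For $\mathfrak{l}_{2n+1}^{A}$, using $[e_i,f_j]=(\delta_{ij}+a_{ij})z$ and $[f_j,e_i]=(-\delta_{ij}+a_{ij})z$, the radicals of $\beta$ are governed by $\ker(A\mp I)$ and $\ker(A^{T}\mp I)$, and they coincide precisely when $A$ has neither $+1$ nor $-1$ as an eigenvalue, i.e. when $f(x)\neq x\mp1$, equivalently $L_0\not\cong\mathfrak{l}_{2n+1}^{J_{\pm1}}$. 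A direct index computation gives $\operatorname{Z}_l=\operatorname{Z}_r=\mathbb{C}z$ for the Kronecker algebra $\mathfrak{k}_n$, whereas for every $\mathfrak{d}_n$ one finds $\operatorname{Z}_l\neq\operatorname{Z}_r$, so the Dieudonn\'e algebras are genuinely excluded. Outside the three exceptional types the criterion then forces $\operatorname{AIDer}(L)=\operatorname{Inn}(L)$.

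The step I expect to be most delicate is the Heisenberg case with a general companion matrix $A$: one must read off the left and right radicals of the $2n\times2n$ block form of $\beta$ (zero diagonal blocks, off-diagonal blocks $I+A$ and $A^{T}-I$) and verify that the eigenvalue-$(+1)$ and eigenvalue-$(-1)$ subspaces line up, which is exactly where the exceptions $\mathfrak{l}_{2n+1}^{J_{\pm1}}$ surface; over $\R$ one should also check that the degree-two irreducible factors never admit $\pm1$ as a root, so they contribute no further exceptions. The Kronecker and Dieudonn\'e verifications are routine, but they require careful bookkeeping of the indices in the defining relations.
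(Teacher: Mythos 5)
Your proposal takes a genuinely different and more structural route than the paper. The paper's own proof is two lines: $d(y)\in[L,y]\subseteq\mathbb{C}z$ for all $y$ and $d(z)=0$, ``thus $d\in\operatorname{Inn}(L)$'' --- the final implication being left implicit (it rests on the classification and on the explicit descriptions of $\operatorname{Inn}$ computed in Sections 2--4, where for each non-exceptional indecomposable algebra $\operatorname{Inn}$ is the full $2n$-dimensional space of maps sending the basis into $\mathbb{C}z$ and killing $z$). Your reformulation via the bilinear form $\beta$, the identification of $\operatorname{Z}_l$ and $\operatorname{Z}_r$ with its radicals, the equality $\dim\operatorname{Z}_l=\dim\operatorname{Z}_r$, and the resulting criterion $\operatorname{AIDer}(L)=\operatorname{Inn}(L)\iff\operatorname{Z}_l(L)=\operatorname{Z}_r(L)$ supply exactly the justification the paper omits, and they explain \emph{why} the exceptions are precisely $\mathfrak{l}_{2n+1}^{J_{\pm1}}$ and $\mathfrak{d}_n$: these are the algebras whose one-sided centers differ. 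The reduction to an indecomposable summand plus an abelian complement, and the eigenvalue-$(\pm1)$ analysis of the companion matrix, are also sound.

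There is, however, one step that does not follow from the definition as stated, and it is the load-bearing one. The almost-inner condition is $d(x)\in[L,x]$ for every $x$; since $[L,x]$ is either $0$ or all of $\mathbb{C}z$, and $[L,x]=0$ exactly when $x\in\operatorname{Z}_r(L)$, the condition on $\varphi$ is $\varphi|_{\operatorname{Z}_r(L)}=0$, not $\varphi|_{\operatorname{Z}(L)}=0$. But the annihilator of $\operatorname{Z}_r(L)$ in $L^{*}$ is precisely the image of $a\mapsto\beta(a,-)$, i.e.\ it already equals $\operatorname{Inn}(L)$ (both have dimension $\dim L-\dim\operatorname{Z}_l(L)=\dim L-\dim\operatorname{Z}_r(L)$). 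So under the literal definition your argument proves $\operatorname{AIDer}(L)=\operatorname{Inn}(L)$ for \emph{every} such $L$, with no exceptions --- which contradicts the paper's later examples of non-inner almost inner derivations of $\mathfrak{l}_{2n+1}^{J_{\pm1}}$ and $\mathfrak{d}_n$. Your version, with $\operatorname{Z}(L)$ in place of $\operatorname{Z}_r(L)$, is what one gets by reading ``almost inner'' as $d(x)\in[x,L]+[L,x]$; that is the reading the paper's own computations of $\operatorname{AIDer}$ implicitly use (for instance, the map $e_{n+1}\mapsto z$ on $\mathfrak{d}_n$ violates $d(x)\in[L,x]$ at $x=e_1+e_2\in\operatorname{Z}_r(\mathfrak{d}_n)$). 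You must therefore commit to one definition: under the one-sided one the statement is true without exceptions and your exceptional analysis is vacuous; under the symmetrized one your criterion and case analysis are correct, but the passage from ``$d(x)\in[L,x]$'' to ``$\varphi$ kills $\operatorname{Z}(L)$'' needs to be rewritten accordingly.
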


\begin{proof}
	Let $d \in \operatorname{AIDer}(L)$. Then $d(y) \in [L,y] \subseteq [L,L]=\mathbb{C} z$, for any $y \in L$ and $d([L,L])=0$. Fixed a basis $\lbrace e_1,\ldots,e_{t-1},z\rbrace$ of $L$, where $t=\dim_{\mathbb{C}}L$, we have that
	$$
	d(e_i)=a_iz, \; \; \forall i=1,\ldots,t-1,
	$$ 
	with $a_i \in \mathbb{F}$ and $d(z)=0$, thus $d \in \operatorname{Inn}(L)$.
\end{proof}

For the Heisenberg Leibniz algebras $\mathfrak{l}_{2n+1}^{J_{a}}$ with $a=\pm 1$ (in \cite{LM2022} it was proved that these two algebras are isomorphic) and for the Dieudonné Leibniz algebra $\mathfrak{d}_n$, it is possible to define an almost inner derivation $d$ which is not inner. For instance, if $a=1$ and we fixed the basis $\lbrace e_1,f_1,\ldots,e_n,f_n,z \rbrace$ of $\mathfrak{l}_{2n+1}^{J_{1}}$, then the matrix 
\begin{equation}
	$$\[
	\left(
	\begin{array}{c|c}
		& 0 \\
		\textbf{\Large0}   & \vdots \\
		& 0 \\
		\hline
		1 \; 0 \ldots 0 \; 0 & 0 \\
	\end{array}
	\right)
	\]$$
\end{equation}
defines a derivation $d \in \operatorname{AIDer}(\mathfrak{l}_{2n+1}^{J_{1}}) \setminus \operatorname{Inn}(\mathfrak{l}_{2n+1}^{J_{1}})$. In the same way 
\begin{equation}
	$$\[
	\left(
	\begin{array}{c|c}
		& 0 \\
		\textbf{\Large0}   & \vdots \\
		& 0 \\
		\hline
		0 \; 0 \ldots 0 \; 1 & 0 \\
	\end{array}
	\right)
	\]$$
\end{equation}
is an almost inner but non-inner derivation of $\mathfrak{l}_{2n+1}^{J_{-1}}$. More precisely every almost inner derivation of $\mathfrak{l}_{2n+1}^{J_{\pm 1}}$ is of the form
\begin{equation}
	$$\[
	\left(
	\begin{array}{c|c}
		& 0 \\
		\textbf{\Large0}   & \vdots \\
		& 0 \\
		\hline
		\mu_1 \; \nu_1 \ldots \mu_n \; \nu_n & 0 \\
	\end{array}
	\right)
	\]$$
\end{equation}
with $a_1,b_1,\ldots,a_n,b_n \in \mathbb{C}$, meanwhile the inner derivations are represented by the set of matrices 
\begin{equation}
	$$\[
	\left(
	\begin{array}{c|c}
		& 0 \\
		\textbf{\Large0}   & \vdots \\
		& 0 \\
		\hline
		0 \; \nu_1 \; \mu_2 \; \nu_2 \ldots \mu_n \; \nu_n & 0 \\
	\end{array}
	\right)
	\]$$
\end{equation}
for $\mathfrak{l}_{2n+1}^{J_{1}}$, and by 
\begin{equation}
	$$\[
	\left(
	\begin{array}{c|c}
		& 0 \\
		\textbf{\Large0}   & \vdots \\
		& 0 \\
		\hline
		\mu_1 \; \nu_1 \; \mu_2 \; \nu_2 \ldots \mu_n \; 0 \; 0 & 0 \\
	\end{array}
	\right)
	\]$$
\end{equation}
for the Leibniz algebra $\mathfrak{l}_{2n+1}^{J_{-1}}$. Finally $\operatorname{AIDer}(\mathfrak{d}_n)$ consists of the matrices of the type
\begin{equation}
	$$\[
	\left(
	\begin{array}{c|c}
		& 0 \\
		\textbf{\Large0}   & \vdots \\
		& 0 \\
		\hline
		\mu_1 \ldots \mu_{n+1}\; \nu_1 \ldots \nu_n & 0
	\end{array}
	\right)
	\]$$
\end{equation} 
and an example of almost inner but non-inner derivations is given by the linear map $d \in \operatorname{gl}(\mathfrak{d}_n)$ defined by $d(e_{n+1})=z$.

%

\printbibliography

\end{document}